\documentclass[12pt,reqno]{amsart}

\usepackage{amssymb}
\usepackage{amscd}
\usepackage{amsfonts}
\usepackage{setspace}
\usepackage{accents}
\usepackage{version}

\usepackage{graphicx}


\newtheorem{theorem}{Theorem}[section]
\newtheorem{lemma}[theorem]{Lemma}
\newtheorem{proposition}[theorem]{Proposition}
\newtheorem{corollary}[theorem]{Corollary}

\newtheorem*{mainprop5-rpt}{Proposition \ref{mainprop5}}
\newtheorem*{k5-lemma-rpt}{Lemma \ref{k5-lemma}}

\theoremstyle{definition}
\newtheorem{definition}[theorem]{Definition}
\newtheorem{question}{Question}

\renewcommand{\leq}{\leqslant}
\renewcommand{\geq}{\geqslant}

\newcommand\sk{\operatorname{sk}}
\newcommand\hcf{\operatorname{hcf}}
\def\dubtildef{\accentset{\approx}{f}(A)}
\def\F{\mathbb{F}}

\def\Z{\mathbb{Z}}
\def\E{\mathbb{E}}
\def\P{\mathbb{P}}

\def\N{\mathbb{N}}
\def\eps{\varepsilon}

\parskip 1mm

\numberwithin{equation}{section}

\begin{document}

\title[Chromatic number of random Cayley graphs]{On the chromatic number of random Cayley graphs}


\author{Ben Green}
\address{The Mathematical Institute, Radcliffe Observatory Quarter, Woodstock Road, Oxford OX2 6GG}
\email{ben.green@maths.ox.ac.uk}

\thanks{The author is supported by ERC Starting Grant 274938 (Approximate Algebraic Structure) as well as by an Investigator Award from the Simons Foundation.}

\onehalfspace
\subjclass[2000]{Primary: 05C15. Secondary: 11P70. }

\begin{abstract}
Let $G$ be an abelian group of cardinality $n$,  where $\hcf(n,6) = 1$, and let $A$ be a random subset of $G$. Form a graph $\Gamma_A$ on vertex set $G$ by joining $x$ to $y$ if and only if $x + y \in A$. Then, with high probability as $n \rightarrow \infty$, the chromatic number $\chi(\Gamma_A)$ is at most $(1 + o(1))\frac{n}{2\log_2 n}$. This is asymptotically sharp when $G = \Z/n\Z$, $n$ prime.
\end{abstract}

\maketitle

\begin{center}
\emph{To B\'ela Bollob\'as on his 70th birthday}
\end{center}

\tableofcontents
\section{Introduction}

A celebrated result of Bollob\'as \cite{bollobas} asserts that a random graph from the $G(n,\frac{1}{2})$ model has chromatic number $(1 + o(1)) \frac{n}{2\log_2 n}$ whp\footnote{whp = with high probability. Throughout the paper, this will mean that the claimed statement holds with probability tending to $1$ as $n \rightarrow \infty$, sometimes with an additional constraint on $n$ which will be explicitly noted.}. Our aim in this note is to prove that $(1 + o(1)) \frac{n}{2\log_2 n}$ is an upper bound for the chromatic number of random Cayley sum graphs\footnote{It is more usual in the literature to take $A$ to be symmetric and to join $i$ to $j$ if and only if $i - j \in A$. This gives the true Cayley graph as opposed to the Cayley sum graph. There is little genuine difference between the two models, and the Cayley sum graphs are slightly simpler to handle notationally, which is why we have preferred them.} on $G$, where $G$ is an abelian group of order $n$ where $\hcf(n, 6) = 1$. If $A \subset G$ is a set then we define its \emph{Cayley sum graph} $\Gamma_A$ to be the graph on vertex set $G$ in which $i$ is joined to $j$ if and only if $i + j \in A$. 

\begin{theorem}\label{mainthm}
Let $G$ be an abelian group of order $n$, and suppose that $A \subset G$ is selected uniformly at random from all subsets of $G$. Then, whp over $n$ with $\hcf(n,6) = 1$, the chromatic number $\chi(\Gamma_A)$ of $\Gamma_A$ is at most $(1 + o(1)) \frac{n}{2\log_2 n}$.
\end{theorem}

What is meant by this is as follows: for every $\eps > 0$, the probability that $\chi(\Gamma_A) \leq (1 + \eps) \frac{n}{2\log_2 n}$ tends to $1$ as $n \rightarrow \infty$ through values of $n$ coprime to $6$.

The condition that $\hcf(n,6) = 1$ could probably be removed, but to do so would involve a number of nontrivial modifications to certain parts of the argument. This is particularly so with regard to the definition of \emph{dissociativity}, which assumes some importance later on. Since the case of greatest interest is probably $G = \Z/n\Z$, $n$ prime, we have chosen not to do this additional work here. In this case, it follows from work of Morris and the author \cite{green-morris} that we have a corresponding lower bound $\chi(\Gamma_A) \geq (1 - o(1))\frac{n}{2\log_2 n}$ whp. To see this, note that we have $\chi(\Gamma) \omega(\Gamma^c) \geq n$ for any graph $\Gamma$ on $n$ vertices, where $\Gamma^c$ denotes the complement of $\Gamma$ and $\omega$ is the clique number. Indeed if $\Gamma$ can be $k$-coloured then there is a set of vertices of size at least $n/k$, all of which get the same colour and which must therefore be independent. An independent set of vertices is the same thing as a clique in $\Gamma^c$. The stated lower bound on $\chi(\Gamma_A)$ is a consequence of this observation and the main result of \cite{green-morris}, which implies that $\omega(\Gamma^c_A) = \omega(\Gamma_{A^c}) \leq (2 + o(1))  \log_2 n$ whp.

A $k$-colouring of $\Gamma_A$ corresponds precisely to a partition $G = X_1 \cup \dots \cup X_k$ with the property that $X_i \hat{+} X_i \subset A^c$ for all $i$, where $X \hat{+} X$ denotes the restricted sumset $\{x + x' : x, x' \in X, x \neq x'\}$. Let us record the arithmetic formulation of the upper bound in Theorem \ref{mainthm} as a separate proposition.

\begin{proposition}\label{mainprop}
Suppose that $A \subset G$ is a random set and let $r = (1 + \eps) \frac{n}{2\log_2 n}$. Then whp over $n$ with $\hcf(n,6) = 1$ there is a partition $G = X_1 \cup \dots \cup X_r$ such that $X_i \hat{+} X_i \subset A$ for all $i$.
\end{proposition}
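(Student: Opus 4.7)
The plan is to build the partition greedily, repeatedly extracting large ``cliques'' of $\Gamma_A$ --- that is, subsets $X \subset G$ with $X \hat{+} X \subset A$ --- from the unused portion of $G$ until only a negligible residue remains. Fix $\delta = \delta(\eps) > 0$ small and set $k := \lfloor (2 - \delta) \log_2 N \rfloor$. The core of the argument is a supersaturation statement: with probability $1 - o(1)$ over $A$, every subset $Y \subset G$ of size at least $N^{1 - \eta}$ (for a suitable small $\eta = \eta(\delta) > 0$) contains a $k$-subset $X$ with $X \hat{+} X \subset A$. Granting this, I would peel off such $X_1, X_2, \dots$ one after another; the process uses at most $N/k = (1 + O(\delta))\frac{N}{2 \log_2 N}$ classes and leaves a residue of size less than $N^{1 - \eta} = o(N / \log N)$, which can be absorbed by placing each remaining vertex in its own singleton class (vacuously satisfying $X_i \hat{+} X_i = \emptyset \subset A$). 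Choosing $\delta$ small in terms of $\eps$ brings the total below $(1 + \eps)\frac{N}{2 \log_2 N}$.

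To establish the supersaturation claim I would fix $Y$ with $|Y| = m \geq N^{1 - \eta}$ and let $Z_Y$ count the Sidon $k$-subsets $X \subset Y$ satisfying $X \hat{+} X \subset A$. Since $k = O(\log N)$, the expected number of nontrivial additive collisions in a uniform random $k$-subset of $Y$ is $O(k^4 / m) = o(1)$, so $\E[Z_Y] \geq (1 - o(1)) \binom{m}{k} 2^{-\binom{k}{2}}$; for $\eta < \delta/4$, say, this quantity is super-polynomially large in $N$. I would then apply Janson's inequality to the events $B_X := \{X \hat{+} X \subset A\}$ indexed by the Sidon $k$-subsets of $Y$. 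The pair-correlation sum $\Delta$ decomposes according to $j := |X \cap X'|$, with each term contributing essentially $\binom{m}{k} \binom{k}{j} \binom{m-k}{k-j} 2^{\binom{j}{2} - 2 \binom{k}{2}}$; a short calculation shows that for these parameters the maximum is attained at $j = 2$ and yields $\Delta \leq \E[Z_Y]^2 \cdot O(k^4 / m^2)$. Hence $\E[Z_Y]^2 / (\E[Z_Y] + \Delta) \geq m^2 / k^{O(1)} \geq N^{3/2}$, and Janson gives $\P[Z_Y = 0] \leq \exp(-N^{3/2})$. A union bound over the at most $2^N$ candidate sets $Y$ then proves the claim.

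The main obstacle is the pair-correlation estimate, which is the part of the argument most sensitive to the Cayley structure. Two features distinguish it from the analogous $G(N,\tfrac{1}{2})$ calculation. First, one must verify that almost all $k$-subsets of $Y$ are genuinely Sidon --- this is where the hypothesis $(N,6) = 1$ enters, ruling out pathological coincidences such as $x + x = y + y$ or $x + x + x = y + y + y$ that would otherwise inflate $\Delta$ and degrade the first-moment bound. Second, pairs $(X, X')$ can contribute to $\Delta$ not only via shared elements of $X \cap X'$ but also via accidental additive relations between $X \hat{+} X$ and $X' \hat{+} X'$ inside $G$, and showing these are subdominant requires controlling the number of $k$-sets with unusually structured sumsets --- precisely the issue that the dissociativity framework alluded to in the introduction is designed to handle.
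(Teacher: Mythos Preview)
Your high-level plan---greedy extraction of large cliques until a small residue remains, with the key step a supersaturation claim for every remaining set $Y$---matches the paper's reduction exactly. The gap is in the Janson estimate. In $G(N,\tfrac12)$, two candidate $k$-cliques are dependent only through shared vertices, so decomposing $\Delta$ by $j=|X\cap X'|$ with overlap factor $2^{\binom{j}{2}}$ is legitimate. In the Cayley setting, however, $B_X$ and $B_{X'}$ are dependent whenever $E[X]\cap E[X']\neq\emptyset$, which can happen with $X\cap X'=\emptyset$: every additive quadruple $s_1+s_2=s_3+s_4$ in $Y$ links any $X\supset\{s_1,s_2\}$ to any $X'\supset\{s_3,s_4\}$. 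If $Y$ has $Q$ such quadruples, the $\ell=1$ stratum alone contributes of order $(Qk^4/m^4)\,\mu^2$ to $\Delta$. For $Y$ an arithmetic progression one has $Q\asymp m^3$, whence $\Delta/\mu^2\gtrsim k^4/m$ rather than $k^4/m^2$; Janson then yields only $\exp(-c\,m/k^4)$, and since $\log\binom{N}{m}\asymp m\,\eta\log N$ the union bound forces $\eta\ll(\log N)^{-5}$, incompatible with the requirement $N^{1-\eta}=o(N/\log N)$ needed to absorb the residue. So your claimed bound $\Delta\le\mu^2\cdot O(k^4/m^2)$ is false for structured $Y$, and the argument does not close.

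The paper confronts exactly this obstruction. It remarks that the direct union bound is ``too crude'' and instead first passes from $Y$ to a \emph{useful} subset $S'$ (Lemma~\ref{tech-lem}) engineered to have few additive quadruples and a carefully tuned size; it then controls the correlation via a case analysis on $\ell=|E[X]\cap E[Y]|$ rather than on $|X\cap Y|$. The $\ell=1$ case uses precisely the lack-of-structure property built into $S'$; the range of $\ell$ near $\binom{k}{2}$ requires an arithmetic $K_5$ lemma (Lemma~\ref{k5-lemma}) with no random-graph analogue. Your final paragraph flags these issues but treats them as peripheral; in fact resolving them is essentially the entire content of the paper, and without the useful-subset reduction and the $\ell$-by-$\ell$ analysis your Janson route cannot be made to work.
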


Note that we wrote $A$ instead of $A^c$, since if $A$ is a random set then so is its complement.

The reader should be aware that many of the key ideas in the proof of this proposition, and hence of Theorem \ref{mainthm}, have exact parallels in Bollob\'as's paper \cite{bollobas}, though we will not always draw attention to these explicitly. However a number of quite nontrivial technical obstacles must be overcome in this arithmetic setting, and herein lies all of the novelty of the present work. 

\emph{Previous results.} A comprehensive resource for questions concerning the clique number of random Cayley graphs is Alon's paper \cite{alon}. Alon considers different groups (not necessarily abelian) and random sets $A$ of different sizes. He notes in \cite[Theorem 2.1 (i)]{alon} that as a consequence of an earlier result of his, joint with Krivelevich and Sudakov \cite{aks}, we have $\chi(\Gamma_A) \ll \frac{n}{\log n}$ almost surely if $A \subset G$ is a random set of size $n/2$. (Here, and henceforth in the paper, $X \ll Y$ means that $X \leq CY$ for some absolute constant $C$.) The same method gives a similar bound when $A \subset G$ is selected uniformly from all subsets of $G$.

This whole argument, which is phrased in terms of graph eigenvalues, translates rather succinctly to the arithmetic setting considered here if one uses a little Fourier analysis (which amounts to essentially the same thing). We give this argument in Appendix \ref{aks-appendix}, obtaining the bound $\chi(\Gamma_A) \leq (2 + o(1)) \frac{n}{\log_2 n}$ for a random set $A$ by these methods. Note that this is four times the bound of Theorem \ref{mainthm}.

Let us note, however, that in the argument of \cite{aks} the set $A$ is only required to be \emph{pseudorandom} in the sense of having no large Fourier coefficients (the bound being weaker the larger these coefficients are). In particular it would apply when $A = Q$ is the set of quadratic residues modulo $n$ when $n$ is prime. In this case $\Gamma_Q$ is called the \emph{Paley sum graph}, and it follows from \cite{aks} that $\chi(\Gamma_Q) \leq (2 + o(1))\log_2 n$, a result that appears to be the best known for this problem. By contrast, our result gives nothing new about this specific graph.

Another observation of Alon is \cite[Proposition 4.5]{alon}, which notes a consequence of an observation of mine from \cite{green-cliquenumber}: if $G = \F_2^m$ and $n = 2^m$ then whp (as $n$ ranges over powers of $2$) $\chi(\Gamma_A) \ll \frac{n}{\log n \log \log n}$.  Thus the chromatic number of a random Cayley graph can depend on the underlying group.

\section{Preliminary man{\oe}uvres}\label{sec2}

We begin with the observation that Proposition \ref{mainprop} is implied by the following result. Here, and for the rest of the paper, we write $E[X] := X \hat{+} X$ (the letter $E$ is supposed to denote ``edge''). 

\begin{proposition}\label{mainprop2}
Suppose that $A \subset G$ is a random set. Then whp there is a partition $G = X_1 \cup \dots \cup X_r \cup X_*$ such that $|X_i| = (2 + o(1))\log_2 n$ uniformly in $i$, $E[X_i]\subset A$, and $|X_*| = o(\frac{n}{\log n})$. 
\end{proposition}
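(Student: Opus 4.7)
The plan is to execute a Bollob\'as-style greedy colouring argument with the colour classes being \emph{dissociated} $k$-subsets of $G$ at the critical size $k = (2+o(1))\log_2 N$. The point of dissociativity (the hypothesis $(N,6)=1$ enters precisely here to exclude pathological additive coincidences) is that if $X$ is dissociated then its restricted sumset $E[X]$ has size exactly $\binom{k}{2}$, and all the events ``$s \in A$'' for $s \in E[X]$ are genuinely independent, so
\[
\P(E[X] \subset A) = 2^{-\binom{k}{2}}.
\]
With $k \sim 2\log_2 N$, one has $\binom{N}{k} 2^{-\binom{k}{2}}$ growing faster than any polynomial in $N$; this is the matching of exponents which makes $2\log_2 N$ the right scale, exactly as in the random graph case.

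The heart of the argument is a \emph{resilient} version of this first moment: for every $Y \subset G$ with $|Y| \geq N/(\log N)^{C}$, with probability at least $1 - \exp(-\Omega(N))$ over the random $A$, there exists a dissociated $X \subset Y$ of size $k$ with $E[X] \subset A$. I would prove this by applying Janson's inequality to the events $\{E[X] \subset A\}$ indexed by the dissociated $k$-subsets of $Y$. Two inputs are needed: (a) dissociated $k$-subsets form a positive proportion of all $k$-subsets of $Y$, a generic statement when $k$ is much smaller than $\sqrt{|Y|}$; (b) a bound on the dependency sum $\Delta := \sum_{X \sim X'} \P(E[X] \cup E[X'] \subset A)$ taken over pairs of dissociated $k$-subsets of $Y$ whose restricted sumsets intersect. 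The resulting tail bound must beat $2^{-N}$ in order to survive a union bound over the (at most $2^N$) choices of $Y$.

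Granted the resilient existence statement, mainprop2 follows by a straightforward greedy extraction: set $Y_0 := G$, and at each step choose a dissociated $X_i \subset Y_i$ of size $k$ with $E[X_i]\subset A$, then set $Y_{i+1} := Y_i \setminus X_i$. The process terminates as soon as $|Y_i| < N/(\log N)^{C}$, at which point we declare $X_* := Y_i$. Then $|X_*| = o(N/\log N)$ and we have produced $r = (1+o(1))N/(2\log_2 N)$ classes, each of uniform size $(2+o(1))\log_2 N$ with $E[X_i] \subset A$.

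The main obstacle is controlling the Janson dependency sum $\Delta$. Two dissociated $k$-sets $X, X' \subset Y$ can have $E[X] \cap E[X'] \neq \emptyset$ in two essentially different ways: through shared elements (which is analogous to shared vertices in Bollob\'as's original argument and can be handled similarly), or through incidental additive coincidences $x_1 + x_2 = x_1' + x_2'$ with $\{x_1,x_2\} \cap \{x_1', x_2'\} = \emptyset$. The latter has no counterpart in the $G(N,1/2)$ setting and is where the arithmetic of $G$ really intrudes. Controlling it will likely require a stronger quantitative form of dissociativity, perhaps demanding that each $X_i$ be Sidon (so that $X$ has no nontrivial additive quadruples at all), together with a careful count of how additive quadruples inside $Y$ can be packaged into overlapping dissociated $k$-subsets; this is where I expect the ``nontrivial technical obstacles'' advertised in the introduction to be concentrated.
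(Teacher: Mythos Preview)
Your greedy-extraction skeleton matches the paper exactly, and you have correctly located the genuinely new difficulty in the ``incidental'' overlaps $x_1 + x_2 = x_1' + x_2'$ with $\{x_1,x_2\}\cap\{x_1',x_2'\} = \emptyset$. However, the quantitative plan has a real gap.

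You ask Janson for a tail bound beating $2^{-N}$, to survive a union bound over all $2^N$ choices of $Y$. This is not achievable. Even in the ideal regime $\Delta = O(\mu)$, Janson yields at best $e^{-c\mu}$, and with $k = (2+o(1))\log_2 N$ and $|Y| \leq N$ one has $\log_2 \mu \leq k\log_2 N - \binom{k}{2} = o((\log N)^2)$; in particular $\mu$ is at most $N^{o(\log N)}$, nowhere near $e^{\Omega(N)}$. (Relatedly, your claim that $\binom{N}{k} 2^{-\binom{k}{2}}$ ``grows faster than any polynomial'' is not right at this scale: the quantity jumps by a factor of roughly $N$ each time $k$ changes by $1$, and at $k = \lfloor 2\log_2 N\rfloor$ it is already $O(1)$. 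For a generic $|Y|$ there need not be any integer $k$ putting $\mu$ in a usable window.)

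The paper repairs both issues simultaneously by inserting a preprocessing step. Given the target set $S$ of size $N/\log^2 N$, one first passes to a random thinning $S' \subset S$ of size $\sim N/\log^{20} N$ which is \emph{useful}: the size is then further adjusted so that some integer $k = (2+o(1))\log_2 N$ gives $\mu \sim N/\log^8 N$ (the ``good clique size'' condition), and the random thinning guarantees that $S'$ has few additive quadruples (the ``lack of structure'' condition). The union bound is then only over sets of size $\leq N/\log^{20} N$, which number $e^{o(N/\log^{19} N)}$, and a tail probability of $e^{-N/\log^{19} N}$ suffices; this is now within reach since $\mu \sim N/\log^8 N$. The lack-of-structure condition is exactly what handles your $\ell = 1$ additive overlaps. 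For the concentration itself the paper uses Azuma on the maximal number of \emph{edge-disjoint} cliques (a $1$-Lipschitz function under single-element flips of $A$) rather than Janson, but once the useful-set reduction is in place and the overlap sum of Section~5 is controlled, Janson would work equally well; the hard part is the same either way.
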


Proposition \ref{mainprop} follows upon splitting the exceptional set $X_*$ into singletons. 

Proposition \ref{mainprop2} follows in turn by a repeated application of the next statement.

\begin{proposition}\label{mainprop3}
Suppose that $A \subset G$ is a random set. Then whp every set $S \subset G$ with $|S| = \frac{n}{\log^{2} n}$ contains a set $X$ with $|X| = (2 + o(1))\log_2 n$ and $E[X] \subset A$.
\end{proposition}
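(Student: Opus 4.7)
My plan is to fix an arbitrary $S \subset G$ with $|S| = s := N/\log^2 N$, establish a super-exponentially small upper bound on the probability (over the random $A$) that $S$ contains no suitable $X$, and then union-bound over the $\binom{N}{s} = 2^{O(s \log \log N)}$ choices of $S$. Writing $k := (2 - \delta)\log_2 N$ with $\delta = \delta(N) \to 0$ sufficiently slowly, the target failure probability for each fixed $S$ is at most $\exp(-\omega(s \log \log N))$, after which letting $\delta \to 0$ gives the advertised asymptotic.

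The natural indicators to study are $I_X := \mathbf{1}[E[X] \subset A]$ for $k$-subsets $X \subset S$. Since $\P[E[X] \subset A] = 2^{-|E[X]|} \geq 2^{-\binom{k}{2}}$, the first moment satisfies
\[
\mu := \E\sum_X I_X \geq \binom{s}{k} 2^{-\binom{k}{2}} = 2^{(\delta - \delta^2/2)\log_2^2 N - O(\log N \log \log N)},
\]
which is comfortably large. To convert this into a lower-tail statement of the required strength, I would apply Janson's inequality,
\[
\P\Bigl[\sum_X I_X = 0\Bigr] \leq \exp\bigl(-\mu^2/(2(\mu + \Delta))\bigr),
\]
where $\Delta = \sum \P[I_X I_{X'} = 1]$ ranges over ordered pairs $(X, X')$ with $X \neq X'$ and $E[X] \cap E[X'] \neq \emptyset$. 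The point is that in the graph analogue $G(N, 1/2)$ this Janson calculation is known (following Bollob\'as \cite{bollobas}) to give failure probability exponentially small in a polynomial of $s$, far stronger than the union bound requires.

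The main obstacle, and where all of the genuinely arithmetic work will concentrate, is bounding $\Delta$. In the Erd\H{o}s--R\'enyi setting two would-be cliques $X, X'$ are correlated only when they share vertices. Here, by contrast, $X$ and $X'$ can additionally correlate through purely arithmetic coincidences $x_1 + x_2 = x'_1 + x'_2$ with $\{x_1, x_2\} \cap \{x'_1, x'_2\} = \emptyset$, and for an adversarially chosen $S$ (say, an arithmetic progression) such coincidences can be very common. I therefore expect one should not run Janson over all $k$-subsets of $S$, but over a carefully chosen sub-family $\mathcal{F}$ of \emph{dissociated} subsets: large enough that $\mu_{\mathcal F} = \sum_{X \in \mathcal F}\P[I_X=1]$ remains of order $\binom{s}{k} 2^{-\binom{k}{2}}$ up to lower-order factors, yet rigid enough that the vertex-sharing contribution to $\Delta$ reproduces the graph-theoretic bound of order $\mu^2 \cdot k^4/s^2$ and the additive contribution admits a good counting bound. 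Producing such an $\mathcal{F}$, and proving the requisite additive estimates inside it, is the technical heart of the argument — and is presumably where the definition of dissociativity alluded to in the introduction, together with the hypothesis $(N, 6) = 1$, will play the decisive role.
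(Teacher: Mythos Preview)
Your numerics do not close. The union bound over all $S$ of size $s = N/\log^2 N$ costs $\binom{N}{s} = \exp\bigl(\Theta(s\log\log N)\bigr)$, so you need failure probability $\exp(-\omega(N\log\log N/\log^2 N))$ for each fixed $S$. But Janson cannot deliver this, because the $\ell=1$ contribution to $\Delta$ is already too large. For any $S$ of this size the additive energy is at least $s^4/N$, so a typical $k$-set $X$ satisfies $\sum_{e\in E[X]} r_S(e) \gtrsim k^2 s^2/N \sim s$; hence $\Delta_{\ell=1} \gtrsim \mu^2 k^2/s$ and $\mu^2/(\mu+\Delta) \lesssim s/k^2 \sim N/\log^4 N$, which is smaller than $N\log\log N/\log^2 N$. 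For structured $S$ (a coset of a subgroup, say) the situation is worse: every $e\in E[X]$ has $r_S(e)\sim s$, so $\Delta_{\ell=1}\gtrsim \mu^2 k^4/s$. Crucially, restricting to dissociated $X$ does nothing for this, since the obstruction is the additive structure of $S$ rather than of $X$; there is no subfamily $\mathcal F$ of $k$-subsets of a coset $S$ with the ``rigidity'' you describe.

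The paper explicitly flags this obstruction and circumvents it by refusing to run the argument on $S$ itself. Instead it first passes (Lemma~\ref{tech-lem}) to a much smaller \emph{useful} subset $S'\subset S$ of size $\sim N/\log^{20} N$, chosen (via random sampling) to be additively unstructured, and with $|S'|$ tuned so that some integer $k$ gives $\binom{|S'|}{k}2^{-\binom{k}{2}}\sim N/\log^8 N$ (the granularity point you have not addressed). The union bound is then only over sets of this smaller size, costing $\exp(o(N/\log^{19} N))$, and the lack-of-structure property makes the $\ell=1$ term genuinely small. Finally, the paper does not use Janson at all: it applies Azuma to the exposure martingale for $f(A)=$ the maximum number of \emph{edge-disjoint} $k$-cliques in $S'$, reducing matters to a first-moment lower bound $\mathbb E f(A)\geq N/\log^9 N$, which is established by counting ``isolated'' dissociated cliques. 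The intersecting-clique analysis you anticipate (Sections 4--5 and the $K_5$ lemma) is where dissociativity and the hypothesis $(N,6)=1$ are used, but it only works once $S$ has been replaced by a useful $S'$.
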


\emph{Remark.} The role of quantity $\frac{n}{\log^2 n}$ here is simply to be something concrete that is $o(\frac{n}{\log n})$.

Perhaps the most obvious approach to proving Proposition \ref{mainprop3} would be to obtain a strong upper bound on the probability that there does \emph{not} exist such a set $X$ for a \emph{fixed} $S$, and then use the union bound over all $S \subset G$ of size $\frac{n}{\log^2 n}$. Unfortunately, this approach is too crude, since there are various difficulties in obtaining a good upper bound for an arbitrary set $S$. We must instead pass to a thinner class of sets $S'$ satisfying some useful technical properties. The following definition and lemma make this possible.

\begin{definition}\label{useful-def}
We say that a set $S' \subset G$ is \emph{useful} if it enjoys the following properties:
\begin{enumerate}
\item \textup{(cardinality) }
\[ 2^{-12}\frac{n}{\log^{20} n} < |S'| \leq \frac{n}{\log^{20} n};\]
\item \textup{(good clique size)}
There is some integer $k = k_{S'}$, $k  = (2 + o(1))\log_2 n$, such that 
\[ \frac{n}{2\log^8 n} < \binom{|S'|}{k} 2^{-\binom{k}{2}} \leq \frac{n}{\log^8 n}.\]
\item \textup{(lack of structure)}
For at least 90\% of all sets $X \subset S'$ of size $k$, the following is true: the number of quadruples $(x_1, x_2, s_1, s_2) \in X \times X \times S' \times S'$ with $x_1 \neq x_2$, $s_1 \neq s_2$ and $x_1 + x_2 = s_1 + s_2$ is at most $\frac{1}{\log^{15} n} |S'|$. \end{enumerate}
\end{definition}

\begin{lemma}\label{tech-lem}
Suppose that $S \subset G$ has size $\frac{n}{\log^2 n}$. Then there is a set $S' \subset S$ that is useful.
\end{lemma}
Before proceeding to the proof of this lemma, we offer some explanatory remarks concerning parts (ii) and (iii) of Definition \ref{useful-def}. Concerning (ii), let us assume (a true statement, in fact, as we shall see below in Lemma \ref{dissoc-lemma}) that ``most'' sets $X \subset S'$ of size $k$ have $|E[X]| = \binom{k}{2}$. Then $\binom{|S'|}{k} 2^{-\binom{k}{2}}$ is roughly the expected number of $X \subset S'$ with $|X| = k$ and $E[X] \subset A$.
At a later point in the argument it will be important to have an integer value of $k$ for which this number is of controlled size, slightly less than $n$ (in fact\footnote{$X \asymp Y$ means that $X$ and $Y$ have the same order of magnitude in the sense that $c_1 Y \leq X \leq c_2 Y$ for absolute constants $c_1, c_2 > 0$.} $\asymp \frac{n}{\log^8 n}$, though there is some flexibility in this choice). Unfortunately, we have 
\[ \frac{\binom{|S'|}{k} 2^{-\binom{k}{2}}}{\binom{|S'|}{k+1} 2^{-\binom{k+1}{2}}} =  \frac{(k+1) 2^k}{|S'| - k}.\] If $|S'|$ is of order $n^{1 + o(1)}$ and $k =  (2 + o(1))\log_2 n$, this will be roughly $n^{1 + o(1)}$. Hence the values of $\binom{|S'|}{k} 2^{-\binom{k}{2}}$ are rather widely spaced as $k$ varies, and unless $|S'|$ is chosen quite judiciously there will be no value of $k$ with the property we require. Item (ii) of Definition \ref{useful-def} is devoted to making just such a judicious choice. 

Concerning (iii) of Definition \ref{useful-def}, let us note that a trivial upper bound for the number of quadruples $(x_1, x_2,s_1, s_2)$ with $x_1 \neq x_2$, $s_1 \neq s_2$ and $x_1 + x_2 = s_1 + s_2$ is $O(|S'|\log^2 n)$, since any choice of $x_1, x_2, s_1$ uniquely determines $s_2$. However, this trivial bound is not necessarily a very sharp one, since we will not usually have $x_1 + x_2 - s_1 \in S'$ unless $S'$ has some particular additive structure.  Item (iii) is asserting, in a certain technical sense, that we can assume $S'$ does not have too much structure. We will only make use of (iii) at one later point in the argument, namely in the proof of Lemma \ref{conditional-count}, but it will be quite crucial there.

\emph{Remark.} The use of additively unstructured sets in contexts like this goes back at least  as far as \cite{alon-early}.

In view of the Lemma \ref{tech-lem}, to prove Proposition \ref{mainprop3} it is enough to establish the following.

\begin{proposition}\label{mainprop4}
Suppose that $A \subset G$ is a random set. Then whp every useful set $S' \subset G$ contains a set $X$ with $|X| = k_{S'}$ and $E[X] \subset A$.
\end{proposition}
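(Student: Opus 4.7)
The plan is to fix a useful set $S'$ and establish a super-polynomially small upper bound on the probability that $S'$ contains no $X$ of size $k_{S'}$ with $E[X] \subset A$, then union-bound over useful $S'$. Since $|S'| \leq N/\log^{20}N$, the number of useful sets is at most $\binom{N}{\leq N/\log^{20}N} \leq \exp(O(N\log\log N/\log^{20}N))$, so any bound of the shape $\exp(-N^{1-o(1)})$ on the failure probability for a fixed $S'$ will suffice.

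Fix a useful $S'$, write $k = k_{S'}$, and let $\mathcal{X}$ be the family of $X \subset S'$ with $|X| = k$ which are (a) dissociated, so $|E[X]| = \binom{k}{2}$, and (b) satisfy the additive-structure conclusion of Lemma \ref{tech-lem}(iii). By (iii) and the dissociativity lemma (Lemma \ref{dissoc-lemma}, promised in the excerpt), $|\mathcal{X}| \geq (0.8 - o(1)) \binom{|S'|}{k}$. Setting $Y = \sum_{X \in \mathcal{X}} \mathbf{1}_{E[X] \subset A}$ and noting that each indicator has probability $2^{-\binom{k}{2}}$, Lemma \ref{tech-lem}(ii) yields
\[
\mu := \E[Y] \;\geq\; (0.4 - o(1))\,\frac{N}{\log^8 N}.
\]

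The next step is to bound $\Pr[Y = 0]$ by Janson's inequality. Writing $X_1 \sim X_2$ when $X_1 \neq X_2$ and $E[X_1] \cap E[X_2] \neq \emptyset$, the relevant correlation sum is
\[
\Delta \;=\; \sum_{X_1 \sim X_2 \text{ in } \mathcal{X}} \Pr[E[X_1] \cup E[X_2] \subset A] \;=\; \sum_{X_1 \sim X_2} 2^{|E[X_1] \cap E[X_2]| - 2\binom{k}{2}}.
\]
I would stratify this by $t = |E[X_1] \cap E[X_2]|$ and, for each $X_1 \in \mathcal{X}$, count the admissible $X_2$. Each shared edge $e = x_1 + x_2$ in $E[X_1]$ must be expressible as $s_1 + s_2$ with $s_1 \neq s_2 \in X_2 \subset S'$, and property (iii) applied to $X_1$ bounds the \emph{total} number of such pairs $(s_1,s_2)$ across all $e \in E[X_1]$ by $|S'|/\log^{15}N$. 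Roughly, each unit of overlap costs a factor $|S'|/(k^2 \log^{15}N) \cdot \binom{|S'|}{k-2}/\binom{|S'|}{k}$ compared with an independent $X_2$, and the geometric series in $t$ is dominated by $t=1$ (dissociativity of $X_1, X_2$ rules out overlaps approaching $\binom{k}{2}$). This should give $\Delta = \mu^2/\log^{\Omega(1)} N$, whence Janson yields
\[
\Pr[Y = 0] \;\leq\; \exp\!\left(-\frac{\mu^2}{2(\mu + \Delta)}\right) \;\leq\; \exp(-N / \operatorname{poly}\log N),
\]
which comfortably beats the union bound above.

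The main obstacle is the stratified bound on $\Delta$: property (iii) is a bound on the \emph{total} number of additive coincidences between $X_1$ and $S'$, but $\Delta$ requires controlling the number of $X_2$'s for each possible intersection size $t$. The small-$t$ regime is handled by (iii); intermediate $t$ requires showing that having many shared edges forces many shared vertices (via the dissociativity of $X_1$ and $X_2$), pushing down the count of $X_2$ in a way that beats the $2^t$ factor. Getting the arithmetic right across all regimes, and verifying the ``useful'' hypotheses are strong enough at every step, is where the real work lies.
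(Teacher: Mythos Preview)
Your high-level strategy differs from the paper's: you propose Janson's inequality on the clique count $Y$, whereas the paper applies Azuma to the Doob martingale of the \emph{edge-disjoint} clique count $f(A)$ (a $1$-Lipschitz function), reducing to a lower bound on $\E f(A)$. Both routes ultimately hinge on the same correlation estimate, so the Janson approach is a legitimate alternative---indeed slightly more direct, since it bypasses the introduction of $f(A)$.

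However, your numerics contain a genuine error that would sink the argument as stated. You write ``$\Delta = \mu^2/\log^{\Omega(1)} N$, whence Janson yields $\Pr[Y=0] \leq \exp(-N/\operatorname{poly}\log N)$''. This implication is false: since $\mu \sim N/\log^8 N$, the claimed bound on $\Delta$ would have $\Delta \gg \mu$, and Janson then gives only $\exp(-\mu^2/(2\Delta)) = \exp(-\log^{O(1)} N)$, which is far too weak for the union bound over $\exp(N^{1-o(1)})$ useful sets. What you actually need is $\Delta = O(\mu)$ (equivalently, for each fixed $X_1 \in \mathcal{X}$, the sum $\sum_{X_2 \sim X_1} 2^{|E[X_1]\cap E[X_2]| - \binom{k}{2}}$ is $O(1)$), which then gives $\exp(-c\mu) = \exp(-cN/\log^8 N)$.

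The paper does establish precisely this stronger bound (conditioned on $E[X_1]\subset A$, the probability of an overlapping dissociated $X_2$ is at most $\tfrac12$), so your approach is salvageable. But you substantially underestimate the work: the claim that ``the geometric series in $t$ is dominated by $t=1$'' is not right. The paper needs three separate arguments. For $t=1$, property (iii) does the job as you say. For $2 \leq t \leq \binom{k}{2} - k^{4/3}$ one argues via the component structure of the ``overlap graph'' on $[k]$ and a skeleton/spanning-forest count; property (iii) plays no role here. The hardest case is $t$ close to $\binom{k}{2}$: here the combinatorial count is too loose, and one needs an arithmetic fact (the paper's $K_5$ lemma) asserting that if $X,Z$ are dissociated with $|Z|\geq 5$ and $E[Z]\subset E[X]$ then $Z\subset X$, forcing $X_1$ and $X_2$ themselves (not just their edge sets) to nearly coincide. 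Your sketch gestures at ``many shared edges forces many shared vertices'' but does not identify this mechanism, which is where dissociativity (as opposed to mere lack of additive coincidences with $S'$) is really used.
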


We will prove this by taking a union bound over all useful sets $S'$. Since any useful set has cardinality at most $\frac{n}{\log^{20} n}$ (by item (i) of Definition \ref{useful-def}) and $\binom{n}{n/\log^{20} n} = e^{o(n/\log^{19} n)}$, it suffices to prove the following.

\begin{proposition}\label{mainprop5}
Let $n$ be sufficiently large, and let $S \subset G$ be useful in the sense of Definition \ref{useful-def}. Let $k_S = (2 + o(1))\log_2 n$ be as in Definition \ref{useful-def} \textup{(ii)}. Suppose that $A \subset G$ is a random set. Then, with probability at least $1 - e^{-n/\log^{19} n}$, there is a set $X \subset S$ with $|X| = k_{S}$ and $E[X] \subset A$.
\end{proposition}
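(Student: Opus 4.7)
The plan is to apply Janson's inequality to the family of events $B_X := \{E[X]\subset A\}$ with $X$ ranging over a well-chosen sub-family of $k_S$-subsets of $S$. Let $\mathcal{F}$ be the collection of those $X \subset S$ of size $k_S$ that are \emph{dissociated}, meaning that the $\binom{k_S}{2}$ pairwise sums $x_1+x_2$ (with $\{x_1,x_2\}\in\binom{X}{2}$) are pairwise distinct (the hypothesis $(N,6)=1$ ensures this is the correct notion and that there are no incidental coincidences like $2x=y+z$). A ``most $k$-sets are dissociated'' lemma of the sort anticipated in the discussion after Lemma~\ref{tech-lem} — leveraging condition (iii) of usefulness — should give $|\mathcal{F}| \geq \tfrac{1}{2}\binom{|S|}{k_S}$. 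For $X\in\mathcal{F}$ the coin tosses $1_{x_1+x_2\in A}$ are genuinely independent as $\{x_1,x_2\}$ varies over $\binom{X}{2}$, so $\P(B_X) = 2^{-\binom{k_S}{2}}$, and condition (ii) yields the first-moment bound
$$\mu := \sum_{X\in \mathcal{F}}\P(B_X) \;\geq\; \tfrac{1}{2}\binom{|S|}{k_S}2^{-\binom{k_S}{2}} \;\geq\; \frac{N}{4\log^8 N}.$$

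Setting $Z := \sum_{X\in\mathcal{F}}1_{B_X}$, it suffices to bound $\P(Z=0)$. Each $B_X$ is an increasing event in the independent fair coins $(1_{a\in A})_{a\in G}$, so Janson's inequality gives
$$\P(Z=0) \;\leq\; \exp\!\left(-\frac{\mu^2}{\mu+2\Delta}\right), \qquad \Delta \;=\! \sum_{\substack{\{X,X'\}\subset \mathcal{F}\\E[X]\cap E[X']\neq\emptyset}}\!\P(B_X\cap B_{X'}) \;=\; \sum 2^{-|E[X]\cup E[X']|}.$$
To control $\Delta$, I would stratify unordered pairs $\{X,X'\}$ by $i := |X\cap X'|$ and then by $t := |E[X]\cap E[X']| - \binom{i}{2} \geq 0$, the number of ``extra'' edge-coincidences beyond those forced by vertex overlap. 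Each such extra coincidence witnesses a quadruple $(x_1,x_2,x_1',x_2')$ with $x_1+x_2 = x_1'+x_2'$ and $\{x_1,x_2\}\not\subset X\cap X'$. If $N_{i,t}$ denotes the number of pairs with parameters $(i,t)$, then $\Delta \leq \sum_{i,t}N_{i,t}\cdot 2^{-2\binom{k_S}{2}+\binom{i}{2}+t}$. The $t=0$ contribution is formally the standard $K_k$-in-$G(n,1/2)$ double-count: invoking (ii), the sum is $O(N\log^{O(1)} N)$, with the dominant term coming from $i=2$. The $t\geq 1$ contribution is where condition (iii) must be deployed, presumably through a conditional-count lemma of the type hinted at in the discussion, to show that the ``extra'' coincidences are rare enough to add at most another $O(N\log^{O(1)} N)$.

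Combining these bounds gives $\mu^2/(\mu+2\Delta) \gtrsim (N/\log^8 N)^2/(N\log^{O(1)} N) \gtrsim N/\log^{O(1)} N$, and by careful book-keeping of the constants one should recover the required $\P(Z=0) \leq \exp(-N/\log^{19} N)$. The hard part is unquestionably the bound on $\Delta$, and within that the control of the $t\geq 1$ non-vertex edge-coincidences. In Bollob\'as's classical $G(n,1/2)$ setting no such coincidences arise, because distinct edges correspond to independent coins by construction; in the Cayley setting, by contrast, two $k_S$-sets $X, X'$ with small vertex overlap can still share arbitrarily many Cayley edges if their pairwise sums collide, and it is precisely to tame this that condition (iii) of usefulness was imposed. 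Translating (iii) into a clean upper bound on the $t\geq 1$ portion of $\Delta$ is, I expect, the central technical novelty of the proof.
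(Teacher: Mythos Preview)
Your high-level strategy --- Janson's inequality in place of the paper's exposure martingale --- is a legitimate alternative, and in fact the two approaches are almost interchangeable. The paper defines $f(A)$ to be the maximum number of \emph{edge-disjoint} $k$-cliques in $A$, observes that $f$ is $1$-Lipschitz under single coin flips, applies Azuma to get $\P(f=0)\leq 2e^{-(\E f)^2/2N}$, and then lower-bounds $\E f$ by $\E\tilde{\tilde f}$, the expected number of good $X$ with $E[X]\subset A$ for which \emph{no} other dissociated $Y$ satisfies $E[Y]\subset A$ and $E[X]\cap E[Y]\neq\emptyset$. Unwinding, this last expectation is exactly $\sum_{X}\P(B_X)\bigl(1-\sum_{Y\sim X}\P(B_Y\mid B_X)\bigr)$, so the paper is proving $\sum_{Y\sim X}\P(B_Y\mid B_X)\leq\tfrac12$ for each good $X$ --- which is precisely the statement $\Delta\leq\tfrac12\mu$ in your Janson set-up. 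With $\Delta=O(\mu)$ you get $\mu^2/(\mu+2\Delta)\gtrsim\mu\sim N/\log^8 N$, far stronger than the $N/\log^{19}N$ needed. So your route is valid and arguably slightly cleaner, in that it bypasses the auxiliary edge-disjoint count $f$.

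Where your proposal misfires is in diagnosing how $\Delta$ is actually controlled. You suggest that condition~(iii) is the workhorse for all $t\geq 1$. In the paper, condition~(iii) is used \emph{only} for the single-edge overlap $\ell=1$ (your $(i\leq 1,t=1)$); for $2\leq\ell\leq\binom{k}{2}-k^{4/3}$ the argument is a graph-theoretic count stratified by the number $d$ of connected components of an auxiliary graph $\Gamma_Y$ on $[k]$ (with $i\sim j$ iff $y_i+y_j\in E[X]$), using a spanning-forest ``skeleton'' to bound the number of $Y$ with given $\ell,d$; and for $\ell>\binom{k}{2}-k^{4/3}$ the key input is a structural ``$K_5$ lemma'' asserting that if $Z$ is dissociated of size $\geq 5$ with $E[Z]\subset E[X]$ then $Z\subset X$, which forces $|X\cap Y|\geq k-5\sqrt{\binom{k}{2}-\ell}$ and in particular rules out $\ell>\binom{k}{2}-\tfrac12 k$ entirely. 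Your $(i,t)$ stratification does not naturally expose this structure: the dangerous regime is large $\ell$ with small $i$ (huge $t$), and nothing in condition~(iii) prevents this a priori --- it is the dissociativity of $X$ and $Y$, via the $K_5$ lemma, that does. So the ``central technical novelty'' is not translating~(iii) into a $\Delta$-bound; it is the $K_5$ lemma and the component-count for the mid-range, neither of which your sketch anticipates. (A minor point: your $\mathcal{F}$ should also impose the lack-of-structure condition on $X$, not just dissociativity, since the $\ell=1$ bound needs it; Lemma~\ref{dissoc-lemma} shows this still retains $80\%$ of the $k$-subsets.)
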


The proof of this bound, which by the reductions just given implies our main theorem, will occupy our attention for most of the rest of the paper. To conclude this section, we prove Lemma \ref{tech-lem}.

\begin{proof}[Proof of Lemma \ref{tech-lem}] The strategy will be to choose $S' \subset S$ to be a random subset of size $\asymp n/\log^{20} n$. The lack of structure condition of Definition \ref{useful-def} (iii) will almost surely be satisfied, but (ii) will not. However, we can pass to a further subset, also of size $\asymp n/\log^{20} n$, which does enjoy this property. Doing so does not do substantial damage to (iii).

We turn to the details. Let $\eps = \frac{1}{2\log^{18} n}$ and select a set $T \subset S$ at random by picking each element of $S$ independently at random with probability $\eps$. By standard tail estimates such as \cite[Theorem A.1.4]{alon-spencer} we have
\begin{equation}\label{size} \P(| |T| - \eps |S|| \geq \textstyle\frac{1}{2} \eps |S|) < 2e^{-C \eps^2 |S|} < \frac{1}{2}.\end{equation}
By an \emph{additive quadruple} in $S$ we mean a quadruple $(s_1,s_2,s_3,s_4)$ with $s_1 \neq s_2$, $s_3 \neq s_4$ and $s_1 + s_2 = s_3 + s_4$. The number of such additive quadruples is certainly less than $|S|^3$, since $s_1,s_2,s_3$ determine $s_4$. Divide the additive quadruples into two classes: the \emph{nondegenerate} ones in which $s_1,s_2,s_3,s_4$ are all distinct, and the \emph{degenerate} ones in which either $s_1 = s_3$ or $s_1 = s_4$. There are at most $2|S|^2$ degenerate quadruples. A degenerate quadruple lies in $T$ with probability $\eps^2$, whereas a nondegenerate one lies in $T$ with probability $\eps^4$. Therefore the expected number of additive quadruples in $T$ is less than $\eps^4 |S|^3 + 2 \eps^2 |S|^2$, which is less than $2\eps^4 |S|^3$ if $N$ is large. By Markov's inequality, the number of additive quadruples in $T$ is less than $4\eps^4 |S|^3$ with probability at least $\frac{1}{2}$. Noting that the number of additive quadruples is equal to 
\[ \sum_{x, x' \in T: x \neq x'} r_{T}(x + x'),\] where $r_{T}(\xi)$ denotes the number of ways of writing $\xi$ as the sum of two \emph{distinct} elements $x,x'$ of $T$,
we see from this and \eqref{size} that there is some $T$ for which 
\begin{equation}\label{size2} \textstyle\frac{1}{2}\eps |S| \leq |T| \leq \frac{3}{2}\eps |S|\end{equation} and
\begin{equation}\label{quad}  \sum_{x, x' \in T, x \neq x'} r_{T}(x + x')  \leq 4\eps^4 |S|^3.\end{equation}
These properties will help us satisfy (i) and (iii) of Definition \ref{useful-def}. Let us leave them aside for now, and concentrate on (ii). For this we require the following lemma. 

\begin{lemma}\label{binom-tech}
Let $M$ be a sufficiently large integer. Suppose that $D$ is a real number satisfying $1 \leq D \leq M^2$. Then there is some integer $M' = M'(D)$, $2^{-10}M \leq M' \leq M$, and an integer $k = (2 + o(1))\log_2 M$ such that $D \leq \binom{M}{k} 2^{-\binom{k}{2}} \leq 2D$.
\end{lemma}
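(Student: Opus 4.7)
The plan is to tune the two parameters $M'$ and $k$ in turn: first pick the integer $k$ as a function of $M$ and $D$, then slide $M'$ over the integers in $[2^{-10}M, M]$ until the quantity $f(M',k) := \binom{M'}{k}2^{-\binom{k}{2}}$ (the intended reading of the right-hand side of the lemma) lands in the target window $[D, 2D]$. For each fixed $k$, $f(M',k)$ is strictly increasing in $M'$, with one-step ratio
\[
\frac{f(M'+1, k)}{f(M', k)} = \frac{M'+1}{M'+1-k} \leq 2 \quad \text{ provided } M' \geq 2k-1.
\]
Since we shall see that $k = O(\log M)$, this factor-of-$2$ bound is valid throughout $M' \in [2^{-10}M, M]$ for $M$ large. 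Hence, once we know $f(2^{-10}M, k) < D \leq f(M, k)$, a discrete intermediate-value step (start at $\lceil 2^{-10}M\rceil$ and increment $M'$ by one until $f$ first crosses $D$) produces some $M'$ in the allowed range with $f(M',k) \in [D, 2D)$.

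To choose $k$, take $k = k(M,D)$ to be the largest integer with $f(M,k) \geq D$; by maximality $f(M, k+1) < D$. Using $\log_2 \binom{M}{k} = k\log_2 M - \log_2 k! + O(k^2/M)$ (valid since $k = o(\sqrt M)$) one gets
\[
\log_2 f(M, k) = k \log_2 M - \binom{k}{2} - \log_2 k! + o(1).
\]
Substituting $k = c\log_2 M$ shows the right-hand side equals $(c - c^2/2)(\log_2 M)^2 + O(\log M \log\log M)$. Since $0 \leq \log_2 D \leq 2\log_2 M$ is of strictly smaller order than $(\log_2 M)^2$, forcing $\log_2 f(M,k) \in [\log_2 D, \log_2 D + O(\log M))$ pins down $c = 2 + o(1)$, uniformly in $D \in [1, M^2]$, so $k = (2+o(1))\log_2 M$ as required.

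It remains to verify $f(2^{-10}M, k) < D$ to complete the sandwich. From the one-step ratio,
\[
f(M, k) = \frac{(k+1)2^k}{M-k}\, f(M, k+1) < \frac{(k+1)2^k}{M-k}\, D,
\]
while
\[
\frac{f(2^{-10}M, k)}{f(M, k)} = \prod_{i=0}^{k-1} \frac{2^{-10}M - i}{M-i} \leq 2^{-10k},
\]
each factor being at most $2^{-10}$. With $k=(2+o(1))\log_2 M$ we have $2^{-9k} = M^{-18+o(1)}$, so combining the two inequalities gives $f(2^{-10}M, k) \leq D \cdot M^{-19+o(1)} \log M$, which is much less than $D$ for $M$ large. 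This yields the required sandwich, and the intermediate-value step finishes the argument. The main obstacle is really just the bookkeeping: one has to show the estimate $k=(2+o(1))\log_2 M$ holds \emph{uniformly} across the full range $D \in [1, M^2]$, and to track enough slack that the shrinking factor $2^{-10k}\approx M^{-20}$ swamps the amplifying factor $(k+1)2^k \approx M^{2+o(1)}\log M$ produced by the one-step comparison.
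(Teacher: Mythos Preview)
Your proof is correct and follows essentially the same strategy as the paper: fix an integer $k$, then slide $M'$ downward using the one-step ratio $\frac{f(M',k)}{f(M'-1,k)}=\frac{M'}{M'-k}\leq 2$ to land in $[D,2D]$. The two arguments differ only in bookkeeping: the paper first pins $F(k,M)$ into a fixed window $[M^2,M^5]$ (independent of $D$) and then invokes the auxiliary inequality $\binom{2n}{k}>2^{k/2}\binom{n}{k}$ to bound how far $M'$ can drop, whereas you choose $k$ maximal with $f(M,k)\geq D$ and obtain the shrinking estimate $f(2^{-10}M,k)/f(M,k)\leq 2^{-10k}$ by the direct factor-by-factor bound $\frac{2^{-10}M-i}{M-i}\leq 2^{-10}$; your route avoids the auxiliary lemma but makes the verification that $k=(2+o(1))\log_2 M$ slightly more delicate (one should note, as you implicitly use, that $f(M,k)\geq 1$ forces $k\leq 2\log_2 M+1$ a priori, and that maximality of $k$ places you on the decreasing branch of $k\mapsto f(M,k)$, ruling out the $c\to 0$ root of $c-c^2/2=0$).
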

\begin{proof}
In the proof of this lemma we write $F(k,M) := \binom{M}{k} 2^{-\binom{k}{2}}$. 
First of all note that if $M > M_0(\eps)$ is sufficiently large then 
\begin{equation}\label{eq887} F( \lceil (2 - \eps) \log_2 M\rceil, M) > M^{10}\end{equation}and
\begin{equation}\label{eq888} F(\lfloor (2 + \eps) \log_2 M \rfloor, M) < M^{-10} = o(1).\end{equation}We leave the straightforward confirmation of these facts to the reader. Thus if there is a value of $k$ such that $D \leq F(k,M) \leq 2D$ with $D$ in the stated range then it automatically satisfies $k = (2 + o(1)) \log_2 M$. 

Note also that if $\log_2 M \leq k \leq 3 \log_2 M$ then 
\[ 1 \leq \frac{F(k,M)}{F(k+1, M)} = \frac{k+1}{M - k} 2^k \leq M^3.\]
Thus, in view of \eqref{eq887} and \eqref{eq888}, there is certainly some $k$ in this range such that 
\[ M^2 \leq F(k,M) \leq M^5.\]
Now we fix this $k$ and start decreasing $M$. Note that 
\[ \frac{F(k,M)}{F(k, M-1)} = \frac{M}{M-k} < 2,\] and so by decreasing $M$ one by one we do hit some $M'$ for which $D \leq F(k,M')\leq 2D$. We must give a lower bound for $M'$. To do this, note that by Lemma \ref{binom1} we have
\[ \frac{F(k, 2t)}{F(k, t)}  \geq 2^{k} > M^{1/2}\] for any $t \geq k$, 
and therefore
\[ \frac{F(k, 2^{10}t)}{F(k, t)} > M^5.\]
It follows that $M' \geq 2^{-10} M$, concluding the proof of Lemma \ref{binom-tech}.
\end{proof}
Let us return now to the proof of Lemma \ref{tech-lem}. Recall that we had isolated a subset $T \subset S$ satisfying \eqref{size2} and \eqref{quad}, where $\eps := \frac{1}{2\log^{18} n}$. By Lemma \ref{binom-tech} applied with $M = |T|$ and $D := \frac{n}{2\log^8 n}$, we can pass to a subset $S' \subset T$ which satisfies (i) and (ii) of Definition \ref{useful-def} for some $k = (2 + o(1))\log_2 n$ satisfying $D \leq \binom{M}{k}2^{-\binom{k}{2}} \leq 2D$. We claim that $S'$ also satisfies property (iii) of that lemma. 

To this end, note that \eqref{size2} and \eqref{quad} together with the bounds for $|S'|$ imply that 
\begin{equation}\label{quad-2}  \sum_{x, x' \in S', x \neq x'} r_{S' }(x+x')  \leq \sum_{x, x' \in T, x \neq x'} r_{T }(x+x')  \leq C \eps |S'|^3 \ll \frac{1}{\log^{18} n} |S'|^3.\end{equation}

Let $X \subset S'$ be a random subset of $S'$ of size $k$. Then, by \eqref{quad-2},

\begin{align*} \E_X \sum_{x, x' \in X, x \neq x'} r_{S'}(x + x') & = \sum_{x, x' \in S', x \neq x'} \P_X (x, x' \in X) r_{S'} (x + x') \\ & = (\frac{k}{|S'|})^2 \sum_{x, x' \in S', x\neq x'} r_{S' }(x+ x') \\ & \ll \big(\frac{k(k-1)}{|S'|(|S'| - 1)}\big) \cdot \frac{1}{\log^{18} n} |S'|^3  \ll \frac{1}{\log^{16} n} |S'|.\end{align*}
By Markov's inequality, we have

\[  \sum_{x, x' \in X, x \neq x'} r_{S'}(x + x') \ll \frac{1}{\log^{16} n} |S'|\] for at least 90\% of all $X \subset S'$ of size $k$, which implies (iii) of Lemma \ref{tech-lem} for large $n$.
\end{proof}

\section{The exposure martingale}

We now explain the main outline of the proof of Proposition \ref{mainprop5}. First, let us recall the statement (the reader may care to recall the definition of $S$ being \emph{useful}, which is given in Definition \ref{useful-def}, but the specifics of that definition are not important in this section).

\begin{mainprop5-rpt}
Let $n$ be sufficiently large, and let $S \subset G$ be useful. Let $k_S = (2 + o(1))\log_2 n$ be as in part \textup{(ii)} of Definition \ref{useful-def}. Suppose that $A \subset G$ is a random set. Then, with probability at least $1 - e^{-n/\log^{19} n}$, there is a set $X \subset S$ with $|X| = k_{S}$ and $E[X] \subset A$.
\end{mainprop5-rpt}

In what follows, we write $k = k_S$ for short. 

For the rest of this section our notation will be as in this proposition. 
Let $\Omega$ be the probability space consisting of all subsets of $G$, each occurring with equal probability $2^{-n}$. Thus $A$ is drawn at random from $\Omega$. Let the random variable $f : \Omega \rightarrow \N$ be defined as follows: $f(A)$ is the maximum value of $r$ for which there exist sets $X_i \subset S$, $i = 1,\dots, r$, with $|X_i| = k$ and such that the $E[X_i]$ are disjoint subsets of $A$. The task of establishing Proposition \ref{mainprop5} is equivalent to showing that 
\begin{equation}\label{to-prove-3} \P( f(A) = 0) \leq e^{-n/\log^{19} n}\end{equation} if $n$ is sufficiently large.

Let $g_1,\dots, g_n$ be some arbitrary enumeration of the elements of $G$. Let $\mathcal{F}_j$ be the sub-$\sigma$-algebra of $2^{\Omega}$ generated by sets of the form $\{A \in \Omega : 1_A(g_1) = \eps_1, \dots 1_A(g_j) = \eps_j\}$ for $(\eps_1,\dots,\eps_j) \in \{0,1\}^j$, and consider the random variables $Z_j := \E( f(A) | \mathcal{F}_j)$. We have the nesting
\[ \mathcal{F}_0 \subset \mathcal{F}_1 \subset \dots \subset \mathcal{F}_n,\] where $\mathcal{F} \subset \mathcal{F}'$ means that $\mathcal{F}'$ is a refinement of $\mathcal{F}$. The sequence $Z_j$ is a Doob martingale and we have $Z_0 = \E f(A)$ and $Z_n = f(A)$. Furthermore, flipping the value of $1_A(g_j)$ cannot change $f(A)$ by more than $1$, since if $g_j \in A$ then removing $g_j$ cannot destroy the containment $E[X_i] \subset A$ for more than one value of $i$, on account of the sets $E[X_i]$ being disjoint. It follows that the martingale $(Z_j)_{j = 0}^n$ enjoys the Lipschitz property $|Z_{j-1} - Z_j| \leq 1$ for $j = 1,2,\dots, n$, and therefore we may apply Azuma's inequality \cite[Corollary 7.2.2]{alon-spencer} to conclude that 
\[ \P( |f(A) - \E f(A)| \geq t) \leq 2 e^{-t^2/2n}.\]
In order to establish \eqref{to-prove-3}, and hence Proposition \ref{mainprop5} and our main theorem, it is enough to prove that 
\begin{equation}\label{expect} \E f(A) \geq n/\log^{9} n \end{equation}
provided that $n$ is sufficiently large.

\section{In search of edge-disjoint cliques}

In the next two sections we always assume that $n = |G|$ is sufficiently large and that $\hcf(n,6) = 1$.

Our remaining task is to prove \eqref{expect}. Let us recall the setup: we have a fixed useful set $S \subset G$, satisfying (i), (ii) and (iii) of Definition \ref{useful-def}, and we wish to give a lower bound for the expectation of $f(A)$, where $A$ ranges uniformly over all subsets of $G$. Here, $f(A)$ is the maximum value of $r$ for which there exist sets $X_i \subset S$, $i = 1,\dots, r$, with $|X_i| = k$ and such that the $E[X_i]$ are disjoint subsets of $A$. Recall that $k = k_S = (2 + o(1))\log_2 n$ is such that (ii) of Lemma \ref{tech-lem} holds.

Define $\tilde f(A)$ to be the number of sets $X \subset S$ such that 
\begin{enumerate}
\item $|X| = k$;
\item $E[X] \subset A$;
\item If $Y \subset S$ is any other set, distinct from $X$, with $|Y| = k$ and $E[Y] \subset A$ then $E[X] \cap E[Y] = \emptyset$.
\end{enumerate}

We clearly have $f(A) \geq \tilde f(A)$, and so it suffices to obtain a lower bound for $\E \tilde f(A)$. In fact, we shall consider the following technical variant of $\tilde f(A)$: define $\dubtildef$ to be the number of sets $X \subset S$ such that 

\begin{enumerate}
\item $|X| = k$;
\item $E[X] \subset A$;
\item (Lack of structure with respect to $S$) The number of quadruples $(x_1, x_2, s_1, s_2) \in X \times X \times S \times S$ with $x_1 \neq x_2$, $s_1 \neq s_2$ and $x_1 + x_2 = s_1 + s_2$ is at most $\frac{1}{\log^{15} n} |S|$;
\item (Dissociativity) If $x_1,\dots, x_4, x'_1,\dots, x'_4 \in X$ and $x_1 + x_2 + x_3 + x_4 = x'_1 + x'_2 + x'_3 + x'_4$ then $x_1,x_2,x_3,x_4$ are a permutation of $x'_1,x'_2,x'_3,x'_4$;
\item If $Y \subset S$ is any other dissociated set with $|Y| = k$ and $E[Y] \subset A$ then $E[X] \cap E[Y] = \emptyset$;
\end{enumerate}
Note that $f(A) \geq \dubtildef$, so it suffices to get a lower bound for $\E \dubtildef$. Specifically, to conclude \eqref{expect} and hence our main theorem, we need only prove the following.

\begin{proposition}\label{to-prove}
Let $S \subset G$ be a fixed useful set. Let $A \subset G$ be chosen at random, and let $\dubtildef$ be as above. Then $\E \dubtildef \geq n/\log^9 n$.
\end{proposition}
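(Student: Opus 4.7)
The plan is a truncated second-moment (or ``deletion'') argument, paralleling the corresponding step in Bollob\'as's treatment of $G(N,1/2)$. Let $f_0(A)$ denote the number of sets $X\subset S$ satisfying conditions (i)--(iv) of the definition of $\tilde{\tilde f}$, that is, all conditions except the edge-disjointness clause (v); and let $f_1(A)$ denote the number of ordered pairs $(X,Y)$ of distinct such ``candidates'' for which $E[X]\cap E[Y]\neq\emptyset$. Any candidate $X$ that fails clause (v) must have some witness $Y\ne X$ sharing an edge with it, so
\[
\tilde{\tilde f}(A)\;\ge\;f_0(A)-f_1(A).
\]
Since usefulness item (ii) of Lemma~\ref{tech-lem} gives $\binom{|S|}{k}2^{-\binom{k}{2}}\ge N/(2\log^{8}N)$, it is enough to establish the two estimates
\[
\E f_0(A)\;\ge\;(1-o(1))\binom{|S|}{k}2^{-\binom{k}{2}}\qquad\text{and}\qquad \E f_1(A)\;\ll\;\frac{N}{\log^{9}N}.
\]

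For the first moment, observe that a dissociated $X$ is in particular Sidon, so $|E[X]|=\binom{k}{2}$ and $\P[E[X]\subset A]=2^{-\binom{k}{2}}$. Usefulness item (iii) of Lemma~\ref{tech-lem} already guarantees that at least $90\%$ of the $k$-subsets of $S$ satisfy condition (iii) of $\tilde{\tilde f}$, and a separate dissociation lemma (the paper's referenced Lemma~\ref{dissoc-lemma}, whose proof is where the hypothesis $(N,6)=1$ enters) should show that a $1-o(1)$ proportion of $k$-subsets of $S$ are $4$-dissociated. Intersecting these two statements, summing $2^{-\binom{k}{2}}$ over the surviving candidates, and invoking usefulness (ii) delivers the lower bound on $\E f_0(A)$.

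For the bad-pair bound, fix a candidate $X$ and stratify the $Y$'s by $e:=|E[X]\cap E[Y]|$, noting that $\P[E[X]\cup E[Y]\subset A]=2^{e-2\binom{k}{2}}$. Each shared edge $t\in E[X]\cap E[Y]$ is an instance of the quadruple counted in condition (iii) of $\tilde{\tilde f}$, so the number of ``potential shared pairs'' $\{s_1,s_2\}\subset S$ with $s_1+s_2\in E[X]$ is at most $|S|/(2\log^{15}N)$. Choosing $e$ of these potential pairs, then completing $Y$ to a $k$-subset of $S$ covering them (which already span at least $e+1$ distinct elements since $Y$ is Sidon), and summing the geometric series in $e$ weighted by $2^{e}$, produces a bound of the shape $\E f_1(A)\ll\binom{|S|}{k}^{2}2^{-2\binom{k}{2}}\cdot(k/|S|)$. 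Plugging in $|S|\sim N/\log^{20}N$, $k\sim 2\log_2 N$ and $\binom{|S|}{k}2^{-\binom{k}{2}}\sim N/\log^{8}N$ yields $\E f_1(A)\ll N/\log^{c}N$ for some $c>9$, comfortably beating the required bound.

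The main obstacle is this last step. The weight $2^{e}$ is substantial: each additional shared edge \emph{doubles} the probability that all the required edges lie in $A$, and the structure condition (iii) has to be strong enough to absorb these factors of $2$. Dissociativity of both $X$ and $Y$ is precisely what makes the pinning work, because it guarantees that shared sums force \emph{pairs} of elements of $Y$ rather than longer additive patterns, so that each shared edge really does buy a genuine factor of roughly $k/|S|$ (or better) in the completion count. Making this bookkeeping careful, presumably as the referenced Lemma~\ref{conditional-count}, is the only substantial analytic work that remains, and is where the full strength of the ``useful'' hypothesis is used.
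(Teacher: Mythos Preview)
Your overall framework matches the paper's: write $\tilde{\tilde f}\ge f_0-f_1$, handle the first moment via Lemma~\ref{dissoc-lemma} and usefulness (ii), and then bound the bad pairs by stratifying over $\ell=|E[X]\cap E[Y]|$. The first-moment part is fine. The gap is entirely in your treatment of $f_1$.

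The claim that $e$ shared edges ``already span at least $e+1$ distinct elements since $Y$ is Sidon'' is false. The Sidon property tells you only that the $e$ shared sums correspond to $e$ distinct \emph{pairs} in $Y$; those pairs form a graph on $[k]$ with $e$ edges, and such a graph can span as few as $O(\sqrt{e})$ vertices (think of a clique). So for large $e$ you do not get anything like $e+1$ pinned elements, and the weight $2^{e}$ --- which for $e$ near $\binom{k}{2}$ is of order $2^{\binom{k}{2}}\sim N^{k}$ --- is not absorbed by your completion count. This is exactly why the paper organises the count not by $e$ alone but by the pair $(e,d)$, where $d$ is the number of connected components of this auxiliary graph $\Gamma_Y$: only $k-d$ vertices are genuinely pinned (via a spanning forest), and one needs the relation $e\le\binom{k-d+1}{2}$ of Lemma~\ref{elldrelation} to trade $2^{e}$ against $\binom{|S|}{d}$. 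Even then the argument breaks when $e>\binom{k}{2}-k^{4/3}$, and the paper requires a genuinely new ingredient, the $K_5$ lemma (Lemma~\ref{k5-lemma}), to show that in this regime $Y$ must nearly coincide with $X$ as a \emph{set}, which is what finally controls the count. Your sketch anticipates none of this structure.

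There is also an arithmetic slip in your summary bound: $\binom{|S|}{k}^{2}2^{-2\binom{k}{2}}\cdot (k/|S|)$ evaluates, with $|S|\sim N/\log^{20}N$ and $\binom{|S|}{k}2^{-\binom{k}{2}}\sim N/\log^{8}N$, to roughly $N\log^{5}N$, not $N/\log^{c}N$. So even granting your heuristic, the numbers do not close; the factor you actually need is more like $1/2$ (conditional probability at most $1/2$ that a fixed candidate $X$ has any bad partner), which is precisely what Lemmas~\ref{conditional-count}--\ref{conditional-count-3} establish.
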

To prove this proposition we will find lower bounds for:

(1) The number of dissociated sets $X \subset S$ with $|X| = k$ and having lack of structure with respect to $S$;

(2) If $X$ is such a set, the probability that $E[X] \subset A$;

(3) The conditional probability that there is another dissociated set $Y \subset S$ with $|Y| = k$, $E[Y] \subset A$ and $E[X] \cap E[Y] \neq \emptyset$. 

Point (2) is actually rather easy: if $X$ is dissociated then all sums $x + x'$ with $x \neq x'$ are distinct apart from the trivial equalities $x + x' = x' + x$, and so $|E[X]| = \binom{k}{2}$. Therefore if $A$ is a random set then

\begin{equation}\label{2-bound} \P(E [X] \subset A) = 2^{-\binom{k}{2}}.\end{equation} We turn now to point (1), which is also quite straightforward, the point being that a random subset consisting of $k$ elements of $S$ is almost certain to be dissociated and to have lack of structure with respect to $S$. We formulate this in a lemma.

\begin{lemma}\label{dissoc-lemma}
Let $S \subset G$ be a useful and let $k = k_S$ be as above. Then at least 80\% of all sets $X \subset S$ with $|X| = k$ satisfy both lack of structure with respect to $S$ and dissociativity. 
\end{lemma}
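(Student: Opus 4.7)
The plan is to split the two required properties and apply a union bound. The first condition---lack of structure with respect to $S$---is handed to us for free by property (iii) in the definition of useful (Lemma \ref{tech-lem}), which asserts directly that at most 10\% of $k$-element subsets $X \subset S$ fail it. So the task reduces to showing that at most 10\% of such $X$ are non-dissociated; a union bound then yields the 80\% claim.

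For the dissociativity bound I would use a first-moment argument. Let $Q(X)$ denote the number of \emph{bad octuples} in $X$, that is, pairs of ordered tuples $((x_1,\dots,x_4),(x'_1,\dots,x'_4)) \in X^4 \times X^4$ with $x_1 + x_2 + x_3 + x_4 = x'_1 + x'_2 + x'_3 + x'_4$ for which $(x_1,\dots,x_4)$ is \emph{not} a permutation of $(x'_1,\dots,x'_4)$. Then $X$ is dissociated exactly when $Q(X) = 0$, so by Markov $\P(X \text{ non-dissociated}) \leq \E Q$.

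To estimate $\E Q$, stratify the sum over bad octuples in $S^8$ by the number $m$ of distinct elements appearing among $\{x_1,\dots,x'_4\}$. The crucial (and only) use of the hypothesis $(N,6) = 1$ is the claim that any bad octuple has $m \geq 3$: if only two values $a \neq b$ appeared, with $i$ copies of $a$ among $x_1,\dots,x_4$ and $j$ copies among $x'_1,\dots,x'_4$, the sum equation reduces to $(i-j)(a-b) = 0$, and since $|i-j| \in \{1,2,3,4\}$ acts as an automorphism on $G$ under $(N,2) = (N,3) = 1$, we are forced to have $i = j$, whence the multisets agree and the octuple is trivial. For each $3 \leq m \leq 8$, the number of bad octuples with exactly $m$ distinct elements is $O(|S|^{m-1})$: there are $O(1)$ surjective labellings $\lambda : [8] \to [m]$ with nonzero coefficient vector $(c_1,\dots,c_m) \in \{-4,\dots,4\}^m$ (the zero-vector case corresponds precisely to permutation octuples), and for each such labelling the equation $c_1 a_1 + \dots + c_m a_m = 0$ has at most $|S|^{m-1}$ solutions in $S^m$, since some $c_\ell \neq 0$ is invertible in $G$. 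The probability that a fixed $m$-element subset of $S$ lies in a uniform random $k$-subset of $S$ is $\binom{|S|-m}{k-m}/\binom{|S|}{k} \leq (2k/|S|)^m$, so
\[ \E Q \;\leq\; \sum_{m=3}^{8} O(|S|^{m-1}) \cdot (2k/|S|)^m \;=\; O(k^8/|S|) \;=\; O\!\bigl(\log^{28} N / N\bigr), \]
using $k = (2 + o(1))\log_2 N$ and the lower bound $|S| \geq 2^{-12} N / \log^{20} N$ from part (i) of useful. This is $o(1)$, so $\P(X \text{ non-dissociated}) \leq 0.1$ once $N$ is sufficiently large.

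The only genuinely delicate point is the $m = 2$ analysis that localises the use of $(N,6) = 1$; once this is in hand, the stratified counting for $m \geq 3$ is routine, involving only a constant number of terms, and the final $O(\log^{28} N / N)$ bound has a very comfortable margin.
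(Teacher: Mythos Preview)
Your proposal is correct. Both you and the paper handle lack of structure the same way (it is built into usefulness via Lemma~\ref{tech-lem}(iii)), and both reduce to showing at most 10\% of $k$-subsets are non-dissociated; the difference is in how that bound is obtained.

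The paper argues sequentially: it builds $X$ one element at a time, observing that at each step the new element $x$ must avoid being the solution to one of $O(j^7)$ equations of the form $x_{i_1}+x_{i_2}+x_{i_3}+x_{i_4} = x_{i_5}+x_{i_6}+x_{i_7}+x$ (or similar, with $x$ appearing more than once); the condition $(N,6)=1$ enters because an equation like $3x = x_{i_2}+x_{i_3}+x_{i_4}-x_{i_1}$ must pin down a \emph{unique} forbidden $x$, which needs the absence of $3$-torsion. This gives $(|S|-O(k^7))^k \geq 0.9\,|S|^k$ ordered dissociated $k$-tuples. Your argument instead bounds $\E Q$ directly by stratifying over the number $m$ of distinct entries in a bad octuple; the condition $(N,6)=1$ enters in the same essential way, guaranteeing that every nonzero coefficient $c_\ell \in \{\pm 1,\pm 2,\pm 3,\pm 4\}$ acts invertibly on $G$, which both kills the $m=2$ stratum and yields the $|S|^{m-1}$ bound for $m \geq 3$. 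The two methods are equivalent in strength here (both give $o(1)$ with enormous room to spare), but your first-moment version makes the r\^ole of the coprimality hypothesis slightly more transparent, while the paper's sequential count is marginally more elementary.
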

\begin{proof}
It is enough to show that at least 90\% of sets $X \subset S$ with $|X| = k$ are dissociated, since it is part of the definition of $S$ being useful (Definition \ref{useful-def} (iii)) that at least 90\% of such sets satisfy lack of structure with respect to $S$. To prove that this is so, we select the elements of $X$ one at a time, without replacement and with an order, and ask what might happen to prevent $X$ being dissociated. If we have selected $j$ elements $\{x_1,\dots, x_j\}$ then the next element $x$ must not give rise to a nontrivial solution to such equations as $x_{i_1} + x_{i_2} + x_{i_3} + x_{i_4} = x_{i_5} + x_{i_6} + x_{i_7} + x$. The number of such equations is no more than $2^8 j^7$, and each forbids a unique  value of $x$. Another example of such an equation is $x_{i_1} + x + x + x = x_{i_2} + x_{i_3} + x_{i_4}$, which forbids a unique value of $x$ because, since $\hcf(n,6) = 1$, $G$ has no $3$-torsion\footnote{If $G$ has $2$-, $3$- or $4$- torsion then more delicate arguments, including a change in the definition of dissociativity, are required.}. Therefore if $j < k$ then the number of choices for $x_{j+1}$ is at least $|S| - 2^8 j^7 \geq |S| - 2^{15} \log^7 n$. Since $|S| \asymp \frac{n}{\log^{20} n}$ we very comfortably have 
\[ ( |S| - 2^{15} \log^7 n)^{k} \geq \textstyle\frac{9}{10} |S|^{k} \geq \frac{9}{10}k!\displaystyle \binom{|S|}{k},\] and this implies the claimed result upon dividing through by $k!$ to take account of the fact that we counted ordered $k$-tuples rather than sets.
\end{proof}

We turn now to point (3). We shall show in the next section that this conditional probability is at most $\frac{1}{2}$. This finishes the task of proving Proposition \ref{to-prove} and hence the main theorem, because by combining these estimates for (1), (2) and (3) we find that the expected number of $X$ satisfying (i) -- (v) in the definition of $\dubtildef$ above is at least
\[ 0.8 \binom{|S|}{k} \times 2^{-\binom{k}{2}} \times \frac{1}{2} \asymp \frac{n}{\log^8 n},\] this last estimate following from the ``good clique size'' property, Definition \ref{useful-def} (ii).

\section{Intersecting arithmetic cliques}

At the end of the last section we had reduced the proof of our main theorem to the following fact, which gives the required bound for the conditional probability (3).

\begin{lemma}\label{conditional-count}
Suppose that $S \subset G$ is useful and that $X \subset S$ has $|X| = k$, is dissociated, and has lack of structure with respect to $S$. Then, conditioned upon the event that $E[X] \subset A$, the probability that there is some other dissociated set $Y \subset S$ with $|Y| = k$, $E[Y] \subset A$ and $E[X] \cap E[Y] \neq \emptyset$ is at most $\frac{1}{2}$.
\end{lemma}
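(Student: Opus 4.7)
The plan is to apply a union bound over candidate $Y$. Since every such $Y$ is dissociated, $|E[Y]| = \binom{k}{2}$, and so the conditional probability $\P(E[Y] \subset A \mid E[X] \subset A) = 2^{-(\binom{k}{2} - t(Y))}$, where $t(Y) := |E[X] \cap E[Y]|$. The union bound therefore reduces the lemma to the estimate
\[
\sum_Y 2^{t(Y)} \leq 2^{\binom{k}{2} - 1},
\]
where $Y$ ranges over dissociated $k$-subsets of $S$ with $Y \neq X$ and $t(Y) \geq 1$.

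The key device is to introduce the auxiliary graph $H$ on vertex set $S$ whose edges are those unordered pairs $\{u,v\}$ with $u \neq v$ and $u + v \in E[X]$. Since $X$ is dissociated, each element of $E[X]$ has a unique representation as a pair sum from $X$, and the lack-of-structure hypothesis then translates into
\[
|E(H)| \leq \frac{|S|}{4 \log^{15} N}.
\]
For dissociated $Y$, all pair sums in $Y$ are distinct, and so $t(Y) = |E(H[Y])|$, the number of edges of $H$ with both endpoints in $Y$.

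Next I would expand $2^{|E(H[Y])|} = \sum_{\mathcal{P} \subset E(H[Y])} 1$ and swap the order of summation. Dropping the dissociativity constraint on $Y$ for an upper bound,
\[
\sum_{Y} 2^{|E(H[Y])|} \leq \sum_{\mathcal{P} \subset E(H)} \binom{|S|-|V(\mathcal{P})|}{k-|V(\mathcal{P})|}.
\]
The $\mathcal{P} = \emptyset$ term contributes $\binom{|S|}{k}$, which cancels against $\#\{Y : t(Y) = 0\}$ when one forms the desired sum. The single-edge contributions $|\mathcal{P}|=1$ give $|E(H)| \binom{|S|-2}{k-2}$; combining this with the good-clique-size estimate $\binom{|S|}{k} \sim 2^{\binom{k}{2}} N / \log^8 N$ and $|E(H)|(k/|S|)^2 \ll \log^{7} N / N$ yields $O(2^{\binom{k}{2}}/\log N)$. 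Larger $|\mathcal{P}|$ contributions decay geometrically, using $|V(\mathcal{P})| \geq \lceil (1 + \sqrt{1 + 8|\mathcal{P}|})/2 \rceil$ and the fact that each additional vertex costs a factor of order $k/|S|$.

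The main obstacle is handling the $Y = X$ term cleanly: the subgraph $H[X]$ is the complete graph on $X$, and the subset $\mathcal{P} = \binom{X}{2}$ single-handedly contributes $2^{\binom{k}{2}}$ to the sum. I would split $\mathcal{P} \subset E(H)$ according to whether $\mathcal{P} \subset \binom{X}{2}$ or $\mathcal{P}$ contains a non-$X$ edge. The former rearranges to $\sum_Y 2^{\binom{m(Y)}{2}}$, where $m(Y) = |X \cap Y|$; after subtracting the $Y = X$ contribution of $2^{\binom{k}{2}}$, this remainder is controlled via analysis of the ratio $f(m+1)/f(m) = (k-m)^2 2^m / ((m+1)(|S|-2k+m+1))$ applied to $f(m) = \binom{k}{m}\binom{|S|-k}{k-m} 2^{\binom{m}{2}}$, the decisive point being that $|S|/2^{k-1}$ is extremely small (of order $1/(N \log^{20} N)$) by the dissociativity-enabled bound on $k$ and the size of $|S|$. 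This bounds the $2 \leq m < k$ contributions by $O(2^{\binom{k}{2}}/(N \log^{19} N))$. Combined with the non-$X$ edge contribution from the previous paragraph, the total is $O(2^{\binom{k}{2}}/\log N)$, which is less than $2^{\binom{k}{2}-1}$ for $N$ sufficiently large, completing the proof.
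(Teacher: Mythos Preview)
Your subset-expansion idea is attractive, but there is a real gap in the ``geometric decay'' step. After splitting off the subsets $\mathcal{P}\subset\binom{X}{2}$ (which you handle correctly), you still have to control
\[
\sum_{\substack{\mathcal{P}\subset E(H)\\ \mathcal{P}\not\subset\binom{X}{2}}}\binom{|S|-|V(\mathcal{P})|}{k-|V(\mathcal{P})|}.
\]
Your claimed decay uses only $|V(\mathcal{P})|\geq c\sqrt{|\mathcal{P}|}$. That bound is far too weak: the number of $\mathcal{P}$ with $|\mathcal{P}|=p$ can be as large as $\binom{|E(H)|}{p}\sim (|S|/\log^{15}N)^p/p!$, and this grows in $p$ much faster than $(k/|S|)^{\sqrt{2p}}$ shrinks. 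So the contributions do \emph{not} decay geometrically; your estimate for the $|\mathcal{P}|\geq 2$ part is unjustified.

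The deeper issue is dense pieces of $H$ not contained in $X$. Nothing in ``lack of structure'' (which only bounds $|E(H)|$) forbids a vertex $u\in S\setminus X$ from being $H$-adjacent to many---potentially all---of $X$. If such a $u$ exists, then for $V=\{u\}\cup(X\setminus\{x_0\})$ the induced graph $H[V]$ has $\binom{k}{2}$ edges, and the $\sim 2^{\binom{k}{2}}$ subsets $\mathcal{P}$ supported on $V$ each contribute at least $1$ to your Case~B sum, already of order $2^{\binom{k}{2}}$. Equivalently, the set $Y=V$ has $t(Y)=\binom{k}{2}$, and since you \emph{dropped} the dissociativity constraint on $Y$, you cannot exclude it. Ruling out such $Y$ is exactly where the paper invokes the $K_5$ lemma (Lemma~\ref{k5-lemma}): a \emph{dissociated} $5$-clique in $H$ must lie inside $X$, which forces any dissociated $Y$ with $t(Y)$ near $\binom{k}{2}$ to satisfy $|X\cap Y|\geq k-O(k^{2/3})$. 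Without retaining dissociativity of $Y$ (and proving something like the $K_5$ lemma), your expansion cannot close in the large-$t(Y)$ regime.
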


We will divide into a number of cases according to the value of $\ell := |E[X] \cap E[Y]|$. The case $\ell = 1$ is somewhat special, so we handle it separately. This is the only point in the argument at which the lack of structure condition, Definition \ref{useful-def} (iii), is required.

\begin{lemma}\label{conditional-count}
Suppose that $S \subset G$ is useful and that $X \subset S$ has $|X| = k$, is dissociated, and has lack of structure with respect to $S$. Then, conditioned upon the event that $E[X] \subset A$, the probability that there is some other dissociated set $Y \subset S$ with $|Y| = k$, $E[Y] \subset A$ and $|E[X] \cap E[Y]| = 1$ is at most $\frac{1}{10}$.
\end{lemma}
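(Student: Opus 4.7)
The plan is a first-moment (Markov) argument. Conditioned on $E[X] \subset A$, write $\mathcal{N}$ for the (deterministic) number of candidate sets $Y \subset S$ with $|Y| = k$, $Y$ dissociated, and $|E[X] \cap E[Y]| = 1$. Since $Y$ is dissociated, $|E[Y]| = \binom{k}{2}$, and so $|E[Y] \setminus E[X]| = \binom{k}{2} - 1$. These $\binom{k}{2}-1$ atoms of $A$ are disjoint from the $\binom{k}{2}$ atoms that our conditioning fixes, so by uniform randomness of $A$,
\[ \P\bigl(E[Y] \subset A \,\big|\, E[X] \subset A\bigr) = 2^{-\binom{k}{2}+1} \]
for each such $Y$. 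By Markov it therefore suffices to show $\mathcal{N} \cdot 2^{-\binom{k}{2}+1} \leq \tfrac{1}{10}$.

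To bound $\mathcal{N}$, parametrise a candidate $Y$ by: (a) the shared edge $e \in E[X]$; (b) an unordered pair $\{y_1, y_2\} \subset S$ with $y_1 + y_2 = e$; (c) the remaining $k-2$ elements of $Y$, chosen from $S$. Writing $r_S(e)$ for the number of ordered pairs $(s_1, s_2) \in S \times S$ with $s_1 \neq s_2$ and $s_1 + s_2 = e$, this gives
\[ \mathcal{N} \leq \sum_{e \in E[X]} \frac{r_S(e)}{2} \binom{|S|}{k-2}. \]

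The key step is to bound $\sum_{e \in E[X]} r_S(e)$ using the lack-of-structure property (iii) of $X$ in the definition of $\tilde{\tilde{f}}$, which says
\[ \sum_{\sigma \in G} r_X(\sigma)\, r_S(\sigma) \leq \frac{|S|}{\log^{15} N}. \]
Dissociativity of $X$ forces $r_X(\sigma) = 2$ for $\sigma \in E[X]$ and $r_X(\sigma) = 0$ otherwise: any collision $x_1 + x_2 = x_1' + x_2'$ with $\{x_1, x_2\} \neq \{x_1', x_2'\}$ in $X$ may, since $k \geq 6$, be padded by a common $x_3 + x_4$ to yield a non-trivial $8$-fold equality contradicting dissociativity. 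Consequently $\sum_{e \in E[X]} r_S(e) \leq |S|/(2\log^{15} N)$, and
\[ \mathcal{N} \leq \frac{|S|}{4 \log^{15} N} \binom{|S|}{k-2}. \]

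Finally, combining this with the ``good clique size'' bound $\binom{|S|}{k} 2^{-\binom{k}{2}} \leq N/\log^8 N$ from (ii) of usefulness, the identity $\binom{|S|}{k-2} \asymp (k/|S|)^2 \binom{|S|}{k}$, the lower bound $|S| \gg N/\log^{20} N$, and $k = (2+o(1))\log_2 N$, one obtains
\[ \mathcal{N} \cdot 2^{-\binom{k}{2}+1} \ll \frac{k^2 \, N}{|S| \log^{23} N} \ll \frac{\log^{22} N}{\log^{23} N} = O\!\left(\frac{1}{\log N}\right), \]
which is well below $\tfrac{1}{10}$ for $N$ large. The main (modest) obstacle is recognising that dissociativity converts the lack-of-structure hypothesis into exactly the bound on $\sum_{e \in E[X]} r_S(e)$ that the parametrisation of $Y$ demands; once that bridge is built the bookkeeping is routine, and the various powers of $\log N$ in the definition of \emph{useful} were precisely tuned to leave this much margin.
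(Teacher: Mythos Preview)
Your proof is correct and follows essentially the same approach as the paper's: a first-moment bound, parametrising $Y$ by the shared edge and a pair $\{y_1,y_2\}\subset S$ summing to it, invoking lack of structure to bound the number of such pairs, then closing with the good-clique-size estimate to get $O(1/\log N)$. The only cosmetic difference is that the paper bounds the pair count directly by the quadruple count $|S|/\log^{15}N$, whereas you route through the identity $\sum_\sigma r_X(\sigma)r_S(\sigma)$ and the observation (via dissociativity) that $r_X\equiv 2$ on $E[X]$; this yields the same bound up to a harmless constant.
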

\begin{proof}
If $|E[X] \cap E[Y]| = 1$ then there is a pair $(x_1,x_2) \in X \times X$, $x_1 \neq x_2$, such that $x_1 + x_2 \in E[Y]$. Thus $Y$ contains two distinct elements $s_1, s_2 \in S$ with $s_1 + s_2 = x_1 + x_2$. The total number of choices for this pair of elements (across all choices of $x_1, x_2$) is bounded by $\frac{1}{\log^{15} n} |S|$, by the assumption that $X$ has lack of structure with respect to $S$. The number of choices for the remaining elements of $Y$ is at most $\binom{|S|}{k-2}$, and hence there are at most 
\[ \frac{1}{\log^{15} n} |S| \cdot \binom{|S|}{k-2} \] choices for $Y$ in total. For each such choice, the probability that $E[Y] \subset A$ given that $E[X] \subset A$ is precisely $2^{1 - \binom{k}{2}}$, since $Y$ is dissociated and hence $|E[Y]| = \binom{k}{2}$.

Therefore the probabilty we seek to bound is at most
\begin{align*} \frac{2}{\log^{15} n} |S| \cdot & \binom{|S|}{k-2} \cdot 2^{-\binom{k}{2}} \\ & = \frac{2|S|k(k-1)}{(|S| - k + 1)(|S| - k + 2) \log^{15} n} \cdot \binom{|S|}{k} 2^{-\binom{k}{2}}.\end{align*}
Recalling that $|S| \asymp \frac{n}{\log^{20} n}$, $k \asymp \log n$ and that $\binom{|S|}{k} 2^{-\binom{k}{2}} \asymp \frac{n}{\log^8 n}$ (the ``good clique size'' property, Definition \ref{useful-def} (ii)) we see that this probability is $\ll \frac{1}{\log n} = o(1)$, as required.\end{proof}

For the remainder of this section, then, we assume that $\ell = |E[X] \cap E[Y]| \geq 2$. To study this situation we introduce some further notation.

Apart from $\ell$, another key quantity will be $d$, defined as follows. Fix, once and for all, an arbitrary total ordering $\prec$ on $G$. Then any set $Y \subset S$ with $|Y| = k$ may be totally ordered as $y_1 \prec \dots \prec y_k$. Define a graph $\Gamma_Y$ on vertex set $[k]$ by joining $i$ and $j$ by an edge if and only if $y_i + y_j \in E[X]$. The number of edges in $\Gamma_Y$ is precisely $\ell$; write $d$ for the number of connected components (including isolated vertices) in $\Gamma_Y$. 

There is an important relation between $\ell$ and $d$.

\begin{lemma}\label{elldrelation}
We have $\ell \leq \frac{1}{2}(k - d + 1)(k - d)$, and $\binom{k}{2} - \ell \geq (d-1)(k - \frac{1}{2}d)$.
\end{lemma}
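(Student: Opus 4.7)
The plan is to reduce both inequalities to a single extremal statement about graphs with a prescribed number of connected components, and then extract the second inequality by a short algebraic identity.

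First, I would set up notation: the graph $\Gamma_Y$ has exactly $k$ vertices, $\ell$ edges, and $d$ connected components. Let $a_1,\dots,a_d$ denote the sizes of the components, so $a_i \geq 1$ and $\sum_{i=1}^d a_i = k$. Since each component contributes at most $\binom{a_i}{2}$ edges, we have
\[ \ell \leq \sum_{i=1}^d \binom{a_i}{2}. \]
Thus the first inequality will follow once I show that this sum is maximized, subject to $\sum a_i = k$ and $a_i \geq 1$, by the degenerate partition $(k-d+1, 1, 1, \dots, 1)$, giving the bound $\binom{k-d+1}{2} = \frac{1}{2}(k-d+1)(k-d)$.

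To prove this extremal claim, I would use the one-step merging inequality
\[ \binom{a}{2} + \binom{b}{2} \leq \binom{a+b-1}{2} + \binom{1}{2}, \qquad a,b\geq 1, \]
which is equivalent to $(a-1)(b-1) \geq 0$, as a direct expansion shows. Iterating this move, at each stage I collapse two components of sizes $\geq 2$ (when they exist) into one larger component and one singleton, which never decreases $\sum \binom{a_i}{2}$ and preserves both $d$ and $\sum a_i = k$. The process terminates at the partition $(k-d+1,1,\dots,1)$, yielding the first inequality.

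For the second inequality, I simply compute
\[ \binom{k}{2} - \binom{k-d+1}{2} = \tfrac{1}{2}\bigl[k(k-1) - (k-d+1)(k-d)\bigr] = \tfrac{1}{2}(d-1)(2k-d) = (d-1)\bigl(k - \tfrac{d}{2}\bigr), \]
and combine it with the first inequality to conclude that $\binom{k}{2} - \ell \geq (d-1)(k - \tfrac{1}{2}d)$. There is no real obstacle here; the only point requiring care is the verification of the merging inequality, which is entirely routine.
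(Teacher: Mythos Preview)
Your proof is correct and follows essentially the same approach as the paper: both bound $\ell$ by $\sum_i \binom{a_i}{2}$ over the component sizes and argue that this sum is maximized at the partition $(k-d+1,1,\dots,1)$. The paper dispatches the extremal step with a one-word appeal to convexity, whereas you spell out an explicit shifting/merging argument and verify the algebraic identity for the second inequality; these are minor presentational differences, not a different method.
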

\begin{proof}
The second statement is obviously equivalent to the first, but we have stated it separately for convenience. Suppose that the components of $\Gamma$ have sizes (number of vertices) $x_1,\dots, x_d$, thus $x_1 + \dots + x_d = k$ and $x_1,\dots, x_d \geq 1$. Then the number of edges in $\Gamma$ is at most $\sum_i \binom{x_i}{2}$. By convexity this is largest when $x_1 = k -d +1 $ and $x_2 = \dots = x_d = 1$, in which case the number of edges is $\frac{1}{2}(k - d + 1)(k - d)$. 
\end{proof}

An important notion will be that of the \emph{skeleton} $\sk(\Gamma_Y)$ of $\Gamma_Y$. Given a graph $\Gamma$ on vertex set $[k]$, its skeleton $\sk(\Gamma) \subset \Gamma$ is a forest (union of trees) whose connected components are precisely the connected components of $\Gamma$. There are, in general, many choices of a skeleton $\sk(\Gamma)$ for each $\Gamma$, but we make an arbitrary one.

Turning back to our main task, we now dispense with the case in which $\ell$ is not extremely close to $\binom{k}{2}$. 

\begin{lemma}\label{conditional-count-2}
Suppose that $S \subset G$ is useful and that $X \subset S$ is a dissociated set with $|X| = k$. Then, conditioned upon the event that $E[X] \subset A$, the probability that there is some other dissociated set $Y \subset S$ with $|Y| = k$, $E[Y] \subset A$ and $2 \leq |E[X] \cap E[Y]| \leq \binom{k}{2} - k^{4/3}$ is at most $\frac{1}{10}$.
\end{lemma}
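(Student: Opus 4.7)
For each dissociated $Y\subset S$ with $|Y|=k$, independence of $\{E[Y]\setminus E[X]\subset A\}$ and $\{E[X]\subset A\}$ gives $\P(E[Y]\subset A \mid E[X]\subset A) = 2^{-(\binom{k}{2}-\ell)}$, where $\ell = |E[X]\cap E[Y]|$. Stratifying by $d=d(Y)$, the number of components of $\Gamma_Y$, the hypothesis $\binom{k}{2}-\ell\ge k^{4/3}$ together with Lemma~\ref{elldrelation} gives $\binom{k}{2}-\ell\ge M(d):=\max\bigl(k^{4/3},(d-1)(k-d/2)\bigr)$. I will bound $\#\{Y:d(Y)=d\}$ by a skeleton argument, then take a weighted union bound.

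To count $Y$ with $d(Y) = d$, I parameterise via (a) a spanning forest $F\subset\binom{[k]}{2}$ with $k-d$ edges, (b) an injection $\phi\colon E(F)\to E[X]$ recording $y_i+y_j$, and (c) a value in $S$ at a canonical root of each of the $d$ tree-components. Propagation $y_j:=\phi(i,j)-y_i$ along $F$ determines $(y_1,\ldots,y_k)$, and every valid $Y$ arises from at least one such datum, so
\[ \#\{Y:d(Y)=d\}\le \binom{\binom{k}{2}}{k-d}\binom{k}{2}^{k-d}|S|^d \le \frac{\binom{k}{2}^{2(k-d)}}{(k-d)!}|S|^d. \]
A Stirling estimate bounds the forest/labeling prefactor by $2^{O(k\log k)} = 2^{O(\log N\log\log N)}$, which will be absorbed in the final exponential. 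The expected number of $Y$'s violating the lemma is at most $\sum_{d=1}^{k-2} T(d)$, where $T(d)$ is the product of the above display with $2^{-M(d)}$.

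Case $d\le k^{1/3}$: use $M(d)\ge k^{4/3}$; since $d\log_2|S|\le dk/2\le k^{4/3}/2$, one has $T(d)\le 2^{-\Omega(k^{4/3})}$. Case $d>k^{1/3}$: use $M(d)\ge (d-1)(k-d/2)$ together with the \emph{useful} bound $|S|\le N/\log^{20}N$ from Lemma~\ref{tech-lem}(i). Substituting $\log_2|S|\le \tfrac{k}{2}-20\log_2\log N$ and expanding gives
\[ d\log_2|S|-(d-1)(k-d/2)\le k-\tfrac{d}{2}(k-d+1) - 20d\log_2\log N, \]
which is $\le -\Omega(k^{4/3})$ on $[k^{1/3},k/2]$ (since $(d/2)(k-d+1)\ge dk/4\ge k^{4/3}/4$) and $\le -\Omega(k\log\log N)$ on $[k/2,k-2]$ (since $k-(d/2)(k-d+1)\le -k/2+O(1)$ there). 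In either subcase $T(d)$ is super-polynomially small, and summing the at most $k$ terms yields $o(1)\le \tfrac{1}{10}$. The main subtlety is that neither the $k^{4/3}$ hypothesis nor Lemma~\ref{elldrelation} alone suffices to beat the $|S|^d$ factor uniformly in $d$; it is the $(\log N)^{20}$ saving in $|S|$ from the useful property that closes the gap for large $d$.
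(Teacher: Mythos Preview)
There is a genuine gap. Your key substitution $\log_2|S| \le \tfrac{k}{2} - 20\log_2\log N$ is false. The size bound $|S|\le N/\log^{20}N$ from Lemma~\ref{tech-lem}(i) gives only $\log_2|S|\le \log_2 N - 20\log_2\log N$, and you have no control on whether $\log_2 N\le k/2$. In fact the good clique size property (ii) forces the opposite sign: from $\binom{|S|}{k}\ge \tfrac{N}{2\log^8 N}\,2^{\binom{k}{2}}$ and $\binom{|S|}{k}\le |S|^k/k!$ one gets
\[
k\log_2|S|\ \ge\ \binom{k}{2}+\log_2 N + \log_2(k!)-O(\log\log N),
\]
hence $\log_2|S|\ge \tfrac{k}{2}+\log_2 k - O(1)$, strictly larger than $k/2$. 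Consequently your bound collapses for $d$ near $k$. Take $d=k-2$ (which occurs, e.g., when $\ell=2$): your prefactor is only $\sim k^8$, while
\[
(k-2)\log_2|S|-M(k-2)\ \ge\ (k-2)\bigl(\tfrac{k}{2}+\log_2 k - O(1)\bigr)-\bigl(\tbinom{k}{2}-3\bigr)\ =\ k\log_2 k + O(k),
\]
so $T(k-2)$ is of order $2^{k\log_2 k}$, not $o(1)$. The same overshoot spoils your ``$d\log_2|S|\le dk/2$'' in the first case, though there the damage is absorbable.

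The culprit is the factor $|S|^d$ rather than $\binom{|S|}{d}$: for $d$ near $k$ this overcounts by $d!\approx 2^{k\log_2 k}$. The paper fixes this by exploiting the total order $\prec$ on $G$ to assign root values in increasing order, giving $\binom{|S|}{d}$ choices, and then --- crucially --- by invoking the \emph{good clique size} property (ii), not the size bound (i), to control $\binom{|S|}{d}$. For $d$ near $k$ (small $\ell$) Lemma~\ref{binom3} gives $\binom{|S|}{d}\le\bigl(\tfrac{k}{|S|-k}\bigr)^{k-d}\binom{|S|}{k}$, which together with $\binom{|S|}{k}2^{-\binom{k}{2}}\le N$ yields $T(d)\ll N^{-c}$; for $d$ small (large $\ell$) Lemma~\ref{binom2} gives $\binom{|S|}{d}\le k^d\binom{|S|}{k}^{d/k}\le 2^{\frac{1}{2}d(k-1)}(2k)^{O(d)}$, after which your $M(d)$ argument goes through. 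So your closing remark has it backwards: the $\log^{20}$ saving in $|S|$ is \emph{not} what closes the gap for large $d$; it is the calibration $\binom{|S|}{k}2^{-\binom{k}{2}}\sim N/\log^8 N$ that does.
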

\begin{proof}
For each $\ell$ we enumerate the number of sets $Y$ for which $|E[X] \cap E[Y]| = \ell$ according to their skeleton $\sigma := \sk(\Gamma_Y)$. Since this skeleton has at most $\min(\ell,k)$ edges, a very crude bound for the number of choices for $\sigma$ is $\sum_{i = 0}^{\min(\ell,k)} \binom{k}{2}^{i} < k^{2\min(\ell,k)}$.

If $\sigma$ has $d$ connected components, then an upper bound for the number of choices for $Y$ is at most $\binom{|S|}{d} k^{2\min(\ell,k)}$. To see this, first select vertices $i_1,\dots,i_d \in [k]$, one in each of the $d$ connected components, and assign the values of $y_{i_j}$ arbitrarily subject to the order relation that $y_i \prec y_{i'}$ if $i < i'$. These elements must be elements of $S$, so there are $\binom{|S|}{d}$ choices. Some further vertices will be joined to an $i_j$ by an edge of $\sigma$. Suppose, for example, that $i_*$ is joined to $i_1$ by such an edge. Then $y_{i_*} + y_{i_1} \in E[X]$, and so there are at most $\binom{k}{2} < k^2$ choices for $y_{i_*}$. Similarly, some further vertices will be joined to $i_*$ by an edge of $\sigma$, and so on. By repeating this process we will eventually assign all of the values $y_1,\dots, y_k$, and the number of choices that has been made is at most the number of edges of $\sigma$, which is at most $\min(\ell, k)$.

Putting these observations together, the number of choices for $Y$, for a fixed given $\ell$, is at most $\binom{|S|}{d} k^{4\min(k,\ell)}$. For each such choice, the probability that $E[Y] \subset A$, given that $E[X] \subset A$, is $2^{- \binom{k}{2} + \ell}$. Summing over the choices of $Y$ for which $|E[X] \cap E[Y]| = \ell$, we get a total contribution of at most
\begin{equation}\label{shortly} \binom{|S|}{d} k^{4 \min(k,\ell)} \cdot 2^{-\binom{k}{2} + \ell} .\end{equation}
We handle this differently according to the size of $\ell$. 

Suppose first that $2 \leq \ell \leq k$. Then by Lemma \ref{binom3} we may bound \eqref{shortly} above by

 \[ \binom{|S|}{k} 2^{-\binom{k}{2}} \cdot 2^{\ell} k^{4\ell} \big(\frac{k}{|S| - k}\big)^{k - d}.\]
The fact that $S$ is useful tells us that $\binom{|S|}{k} 2^{-\binom{k}{2}} \asymp \frac{n}{\log^8 n} < n$. Furthermore the first relation in Lemma \ref{elldrelation} implies that $k - d \geq \sqrt{\ell}$. Since $|S| \asymp n^{1 + o(1)}$ and $k < 3\log_2 n$, these facts together allow us to bound \eqref{shortly} above by
\[ \frac{(6 \log_2 n)^{4\ell} n}{n^{(1 - o(1))\sqrt{\ell}}},\]which is $\ll n^{-c}$ for the stated range of $\ell$ (and in fact for $\ell$ up to about $\frac{k^2}{\log^{O(1)} k}$).

If $k < \ell \leq \binom{k}{2} - k^{4/3}$ then we replace $\min(k,\ell)$ by $k$, so the quantity \eqref{shortly} that we wish to bound is
\begin{equation}\label{eq46} \binom{|S|}{d} k^{4k} 2^{- \binom{k}{2} + \ell}.\end{equation}
Since $S$ is useful, we have $\binom{|S|}{k} = 2^{\binom{k}{2}} n^{1 + o(1)}$, and so from Lemma \ref{binom2} and the fact that $k \sim 2\log_2 N$ we have \[ \binom{|S|}{d}  \leq  k^d \binom{|S|}{k} ^{d/k}   \leq 2^{\frac{1}{2}d(k-1)} k^dn^{O(d/k)} \leq 2^{\frac{1}{2}d(k-1)} (2k)^{O(d)} .\]
Since $d \leq k$, an upper bound for \eqref{eq46} is therefore
\begin{equation}\label{eq47} 2^{\frac{1}{2} dk + \ell - \binom{k}{2} + O(k \log k)}.\end{equation}
Since $\ell - \binom{k}{2} \leq  -k^{4/3}$, we get an upper bound of $2^{-\frac{1}{2} k^{4/3}}$ if $d < k^{1/10}$, and this is certainly acceptable when summed over all $\ell$. If $d > k^{1/10}$, we instead apply the second bound $\binom{k}{2} - \ell \geq (d-1)(k - \frac{1}{2}d)$ from Lemma \ref{elldrelation}. This implies that \eqref{eq47} is bounded above by
\begin{equation}\label{eq48} 2^{-\frac{1}{2}d (k - d) + O(k \log k)}.\end{equation}
As above (a consequence of the first bound in Lemma \ref{elldrelation}) we have $k - d \geq \sqrt{\ell} > \sqrt{k} > k^{1/10}$, and so $k^{1/10} \leq d \leq k - k^{1/10}$. But in this range we have $d(k-d) \gg k^{11/10}$. Thus \eqref{eq48} is bounded by $2^{-c k^{11/10}}$, which is again acceptable when summed over $\ell$. This concludes the proof.
\end{proof}

Finally, we need to consider the possibility that $\binom{k}{2} - k^{4/3} < \ell \leq \binom{k}{2}$. 

\begin{lemma}\label{conditional-count-3}
Suppose that $S \subset G$ is useful and that $X \subset S$ is a dissociated set with $|X| = k$. Then, conditioned upon the event that $E[X] \subset A$, the probability that there is some other dissociated set $Y \subset S$ with $|Y| = k$, $E[Y] \subset A$ and $\binom{k}{2} - k^{4/3} <   |E[X] \cap E[Y]| \leq \binom{k}{2}$ is at most $\frac{1}{10}$.
\end{lemma}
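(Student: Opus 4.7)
The main challenge here is that the probability factor $2^{\ell - \binom{k}{2}}$ buys almost no saving, so one cannot afford to enumerate $Y$ crudely. The plan is to exploit the dissociativity of $Y$ (condition (iv) in the definition of $\tilde{\tilde{f}}$), which was unused in the proof of Lemma~\ref{conditional-count-2}.

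The key tool is a $4$-clique rigidity. Suppose $\{i_1,i_2,i_3,i_4\} \subseteq [k]$ induces a clique of good edges in $\Gamma_Y$ and write $\phi(i_a, i_b) = \{p, q\}$ for the pair with $y_{i_a} + y_{i_b} = x_p + x_q$. The three associative decompositions
\[
(y_{i_1}+y_{i_2})+(y_{i_3}+y_{i_4}) = (y_{i_1}+y_{i_3})+(y_{i_2}+y_{i_4}) = (y_{i_1}+y_{i_4})+(y_{i_2}+y_{i_3})
\]
express the same element of $G$ as three sums of four elements of $X$; by order-$4$ dissociativity of $X$ the three resulting $4$-multisets of indices must coincide. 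The dissociativity of $Y$ forces the six values $\phi(i_a, i_b)$ to be distinct pairs, and a short case analysis then shows the common multiset has four \emph{distinct} entries whose three pair-partitions recover the six $\phi$-values. Combined with the absence of $2$-torsion in $G$, this yields an injection $f:\{i_1,\ldots,i_4\} \to [k]$ with the pointwise identity $y_{i_a} = x_{f(i_a)}$.

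Since $m := \binom{k}{2} - \ell < k^{4/3}$, a simple Tur\'an-type count shows that most vertices lie in many $4$-cliques of good edges, and any two such $4$-cliques overlapping in at least two vertices force their local $f$'s to agree; the local maps patch into an injection $F$ on all ``non-exceptional'' vertices (those with $y_v \in X$). A short argument (using dissociativity of order $3$, which follows from order $4$) shows that an exceptional vertex $v$ can have at most one good edge to a non-exceptional vertex, and hence its bad-degree is at least $k-2$. If there are no exceptional vertices then $F$ is a bijection $[k] \to [k]$ with $y_v = x_{F(v)}$; every pair sum therefore lies in $E[X]$, so $\ell = \binom{k}{2}$ and $Y = X$ as a set, contradicting $Y \neq X$. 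Hence at least one exceptional vertex must be present, so $m \geq k-2$.

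A crude bound on the number of $Y$'s with exactly $t \geq 1$ exceptional vertices is $\binom{k}{t}(|S|+k^{O(1)})^t$, since each exceptional vertex contributes at most $|S|$ or $k^{O(1)}$ allowed values depending on whether it has a good neighbor. Multiplied by the conditional probability $2^{-m} \leq 2^{-t(k-2)/2}$, the total contribution is
\[
\sum_{t \geq 1} \binom{k}{t}\bigl(|S|+k^{O(1)}\bigr)^t \cdot 2^{-t(k-2)/2} \leq \sum_{t \geq 1} \bigl(Ck \cdot N \cdot 2^{-k/2}\bigr)^t = o(1),
\]
using $k \sim 2\log_2 N$ so that $2^{-k/2} \sim N^{-1}$, which is comfortably under $1/10$. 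The main obstacle will be the detailed $4$-clique case analysis and the treatment of configurations in which several exceptional vertices are mutually adjacent in $\Gamma_Y$; these inter-constraints require an inductive application of the $4$-clique argument restricted to the exceptional subgraph (which must itself be $K_4$-free by the same rigidity), and this is where I would focus the most careful work.
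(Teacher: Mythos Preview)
Your overall strategy---use dissociativity to force most of $Y$ to literally coincide with $X$, then count the few ``exceptional'' elements---is exactly the paper's. The difficulty is that your $4$-clique rigidity claim is false as stated. Suppose $x_1,x_2,x_3,x_4 \in X$ and set $g$ to be the unique element with $2g = x_1+x_2+x_3+x_4$ (here you use only the absence of $2$-torsion). Put $z_i := g - x_i$. Then $z_i + z_j = x_p + x_q$ where $\{p,q\} = \{1,2,3,4\}\setminus\{i,j\}$, so $E[\{z_1,\dots,z_4\}] \subset E[X]$; moreover $\{z_1,\dots,z_4\}$ inherits order-$4$ dissociativity from $X$. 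In your notation the common $4$-multiset is $\{1,2,3,4\}$ and its three pair-partitions do recover the six $\phi$-values---but via the \emph{complementary} matching $\phi(i,j)=\{1,2,3,4\}\setminus\{i,j\}$ rather than $\phi(i,j)=\{f(i),f(j)\}$. In this ``type~II'' configuration there is no injection $f$ with $z_i = x_{f(i)}$; indeed the $z_i$ need not lie in $X$ at all. Your ``short case analysis'' must therefore bifurcate, and the type~II branch does not yield the pointwise identity you claim. The same issue undermines your later assertion that the exceptional subgraph is $K_4$-free: a type~II clique of four exceptional vertices is perfectly consistent with everything you have assumed.

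The paper sidesteps this by proving rigidity for $K_5$ rather than $K_4$ (its Lemma~\ref{k5-lemma}): one shows that a type~II $4$-clique cannot be extended to a $5$-clique inside $E[X]$, so any $5$-clique is forced to be type~I and hence contained in $X$. The Tur\'an step then says all but $O(\sqrt{\binom{k}{2}-\ell})$ vertices of $\Gamma_Y$ lie in a $K_5$, giving $|X\cap Y| \geq k - O(k^{2/3})$ directly (Corollary~\ref{corrr}). From there the paper's counting is organised around the component structure of $\Gamma_Y$ (the parameter $d$) rather than the number $t$ of exceptional vertices, but your endgame with $t$ would also go through once the rigidity is fixed: your observation that an element of $Y\setminus X$ can have at most one (in fact zero, by the order-$3$ consequence of dissociativity you invoke) good edge into $X\cap Y$ is correct and gives $m \geq t(k-t-1)$, which combined with $m < k^{4/3}$ forces $t = O(k^{1/3})$ and makes the sum $\sum_t \binom{k}{t}^2 |S|^t 2^{-t(k-t)}$ (note the extra $\binom{k}{t}$ for the choice of $Y\cap X \subset X$, which you omitted) comfortably $o(1)$. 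So the gap is localised: replace $K_4$ by $K_5$, or else carry the type~II case through the patching argument explicitly.
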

In the regime covered here, the argument used in proving Lemma \ref{conditional-count-2} breaks down.  The key new observation here is that if $\ell = |E[X] \cap E[Y]|$ is nearly $\binom{k}{2}$ then in fact $X$ and $Y$ have substantial overlap as well, at least if $X$ and $Y$ are sufficiently dissociated. To prepare the ground for proving this, we first establish a couple of lemmas.

The first of these, which is really the key, has a tedious but basically straightforward proof which we outsource to Appendix \ref{tedious}. 

\begin{lemma}[$K_5$ lemma]\label{k5-lemma}
Suppose that $X, Z \subset G$ are dissociated, that $|Z| \geq 5$ and that $E[Z] \subset E[X]$. Then $Z \subset X$.
\end{lemma}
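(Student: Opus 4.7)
The plan is to show that every $z \in Z$ lies in $X$; by symmetry it is enough to fix $z_1 \in Z$ and prove $z_1 \in X$. I would pick four further distinct elements $z_2, z_3, z_4, z_5 \in Z$ (using $|Z| \geq 5$). For each unordered pair $\{i,j\} \subset \{1,\dots,5\}$ with $i \neq j$, the sum $z_i+z_j$ lies in $E[Z] \subset E[X]$, and since 4-dissociativity of $X$ immediately implies 2-dissociativity, this sum has a unique representation as a 2-subset $\pi(i,j) \subset X$.

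The first step is to show that the four 2-sets $\pi(1,2), \pi(1,3), \pi(1,4), \pi(1,5)$ pairwise intersect. For distinct $i, j \in \{2,3,4,5\}$, pick the remaining index $k$ and apply 4-dissociativity of $X$ to the identity $(z_1+z_i)+(z_j+z_k) = (z_1+z_j)+(z_i+z_k)$; this forces the 4-multiset equality $\pi(1,i) \cup \pi(j,k) = \pi(1,j) \cup \pi(i,k)$. If $\pi(1,i) \cap \pi(1,j) = \emptyset$, the two elements of $\pi(1,i)$ must land inside $\pi(i,k)$, giving $\pi(i,k) = \pi(1,i)$ and hence $z_k = z_1$, a contradiction. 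A short combinatorial argument then shows that four distinct pairwise-intersecting 2-sets must share a common element: either three of them already share a vertex $v$ (and a fourth avoiding $v$ would need three distinct non-$v$ intersection points, exceeding its size 2), or three of them form a triangle $\{a,b\},\{a,c\},\{b,c\}$, in which case any further pairwise-intersecting 2-set must coincide with one of them. Call this common element $x^* \in X$.

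Next I would write $\pi(1,j) = \{x^*, y_j\}$, and from $x^*+y_j = z_1+z_j$ read off $y_j = z_j + c$ where $c := z_1 - x^*$ is independent of $j$. It remains to prove $c = 0$. To this end, apply 4-dissociativity to the 4-subset $\{1,2,3,4\}$, which yields
\[ \pi(1,2) \cup \pi(3,4) = \pi(1,3) \cup \pi(2,4) = \pi(1,4) \cup \pi(2,3). \]
Substituting $\pi(1,j) = \{x^*, z_j+c\}$ into the equality of the outermost members and cancelling one copy of $x^*$, this reduces to $\{z_2+c\} \cup \pi(3,4) = \{z_4+c\} \cup \pi(2,3)$. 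Matching $z_2+c$ on the right, the coincidences $z_2+c = z_4+c$ and $z_2+c = x^*$ are ruled out (the first gives $z_2 = z_4$, the second collapses $\pi(1,2)$ to a singleton, contradicting $|\pi(1,2)|=2$), so $z_2+c \in \pi(2,3)$, and symmetrically $z_4+c \in \pi(3,4)$. These force $\pi(2,3) = \{z_2+c, z_3-c\}$ and $\pi(3,4) = \{z_4+c, z_3-c\}$.

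Finally, plugging these into $\pi(1,3) \cup \pi(2,4) = \pi(1,4) \cup \pi(2,3)$ and cancelling $x^*$, I must match $z_3+c$ against $\{z_4+c, z_2+c, z_3-c\}$. The first two options give $z_3 = z_4$ or $z_2 = z_3$, both contradictions, leaving only $z_3+c = z_3-c$, i.e.\ $2c = 0$. Since $(N,6) = 1$, $G$ has no 2-torsion, so $c = 0$ and $z_1 = x^* \in X$. The main obstacle is the careful multiset bookkeeping in the last two paragraphs; the hypothesis $|Z| \geq 5$ is what makes the sunflower conclusion available for the four 2-sets $\pi(1,\cdot)$, and the $(N,6)=1$ hypothesis is used precisely once, at the very end, to rule out a nonzero shift with $2c=0$.
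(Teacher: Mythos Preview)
Your proof is correct and takes a genuinely different route from the paper's. The paper first classifies all $4$-element subsets $Z' \subset Z$ with $E[Z'] \subset E[X]$ into two types --- type~I where $Z' \subset X$, and type~II where $Z' = \{g - x_1, \dots, g - x_4\}$ for some $x_i \in X$ with $2g = \sum x_i$ --- via a somewhat lengthy case analysis of the relations \eqref{master}. It then shows (a) one type~I subset forces all of $Z$ into $X$, and (b) not every $4$-subset can be of type~II. Your argument bypasses this dichotomy entirely: fixing $z_1$ and using the four companions $z_2,\dots,z_5$, you observe that the four representing pairs $\pi(1,j)$ are pairwise intersecting (an immediate consequence of $4$-dissociativity) and distinct, so a short sunflower-type argument produces a common element $x^* \in X$; writing $c = z_1 - x^*$ you then force $2c = 0$ by two further applications of dissociativity. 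This is cleaner and more conceptual than the paper's case-by-case verification, and it makes transparent exactly where the hypothesis $|Z| \geq 5$ enters (four pairwise-intersecting $2$-sets must share a point, three need not) and where $(N,6)=1$ is invoked (only to kill $2$-torsion at the very end, just as in the paper).

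Two tiny remarks on presentation: when you say ``pick the remaining index $k$'' there are in fact two remaining indices in $\{2,3,4,5\}\setminus\{i,j\}$ --- either works, so just say so; and the aside ruling out $z_2+c = x^*$ is correct but slightly misplaced (after cancelling $x^*$ the point is simply that $z_2+c$ must lie in $\{z_4+c\} \cup \pi(2,3)$, and only the first option needs to be excluded). Neither affects the validity of the argument.
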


The next lemma is a graph-theoretic fact of a fairly standard type.

\begin{lemma}\label{turan-consequence}
Suppose that $\Gamma$ is a graph on $k$ vertices with $\ell$ edges. Then all but at most $5\sqrt{\binom{k}{2} - \ell}$ vertices of $\Gamma$ lie in a subgraph of $\Gamma$ isomorphic to the complete graph $K_5$. 
\end{lemma}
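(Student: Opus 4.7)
The plan is to pass to the complement graph $H := \bar\Gamma$, which has exactly $m := \binom{k}{2} - \ell$ edges, and to extract the result from Tur\'an's theorem for $K_4$. A vertex $v$ lies in some $K_5$ subgraph of $\Gamma$ precisely when its $\Gamma$-neighbourhood $N_\Gamma(v)$ contains a $K_4$; write $B \subset [k]$ for the set of vertices for which this fails. My goal is to show $|B| \leq 5\sqrt{m}$. The inequality is trivial when $k \leq 5\sqrt{m}$, so I may assume $k > 5\sqrt{m}$.

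Fix $v \in B$ and set $n_v := |N_\Gamma(v)| = k - 1 - d_H(v)$. Because $v$ is bad, $\Gamma[N_\Gamma(v)]$ is $K_4$-free, so Tur\'an's theorem yields
\[
|E(\Gamma[N_\Gamma(v)])| \leq \tfrac{1}{3} n_v^2, \qquad \text{hence} \qquad |E(H[N_\Gamma(v)])| \geq \tfrac{1}{6} n_v^2 - \tfrac{1}{2} n_v.
\]
Since $H$ has only $m$ edges in total, this forces $n_v \leq \sqrt{6m} + O(1)$, and therefore
\[
d_H(v) \geq k - 1 - \sqrt{6m} - O(1).
\]

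Summing this lower bound over $v \in B$ and using $\sum_{v} d_H(v) = 2m$, I obtain
\[
|B|\bigl(k - 1 - \sqrt{6m} - O(1)\bigr) \leq 2m.
\]
Since $k > 5\sqrt{m}$ and $5 - \sqrt{6} > 2.5$, the parenthesised factor comfortably exceeds $2\sqrt{m}$ once $m$ is moderately large, yielding $|B| < \sqrt{m} \leq 5\sqrt{m}$ with substantial room to spare. Small values of $m$ (and of $k$) can be absorbed into the generous constant, since either the bound is vacuous or only a handful of bad vertices is possible. The only genuinely substantive step is the Tur\'an estimate on $\Gamma[N_\Gamma(v)]$; this is also where the constant $5$ enters, and the slack in the calculation shows that the statement is not sharp but is chosen to leave plenty of margin for the application in Lemma~\ref{conditional-count-3}.
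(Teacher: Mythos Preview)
Your argument is correct (with the same caveat as the paper's own proof that the small cases are handled somewhat informally), but the route is genuinely different from what the paper does. The paper observes directly that the set $B$ of bad vertices itself induces a $K_5$-free subgraph of $\Gamma$: if five bad vertices spanned a $K_5$, none of them would be bad. A single application of Tur\'an's theorem to $\Gamma|_B$ then gives $|E(\Gamma|_B)| \leq \tfrac{3}{8}|B|^2$, so the complement $H$ contains at least $\binom{|B|}{2} - \tfrac{3}{8}|B|^2 \geq c|B|^2$ edges, forcing $|B| \leq 5\sqrt{m}$. By contrast, you localise: for each $v \in B$ you apply Tur\'an (for $K_4$) inside $N_\Gamma(v)$, deduce that $n_v$ is small and hence $d_H(v)$ is large, and then aggregate via $\sum_v d_H(v) = 2m$. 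The paper's proof is shorter and uses Tur\'an once rather than once per bad vertex; your approach is slightly more roundabout but has the mild advantage that it would extend more readily if one wanted information about individual bad vertices rather than just their number. Either way the constant $5$ is far from sharp, as you note.
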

\begin{proof}
The result is trivial if $\ell = \binom{k}{2}$, so suppose $\ell \leq \binom{k}{2} - 1$. Let $V \subset [k]$ be the set of vertices not contained in any copy of $K_5$ in $\Gamma$.  If $|V| \leq 5$ then the result is immediate. Otherwise, certainly $\Gamma|_V$ does not contain any copy of $K_5$ and hence, by Tur\'an's theorem, this graph has at most $\frac{3}{8} |V|^2$ edges. Hence $\binom{k}{2} - \ell$, the number of edges in the complement of $\Gamma$, is at least $\binom{|V|}{2} - \frac{3}{8} |V|^2 \geq \frac{1}{25}|V|^2$. The result follows. 
\end{proof}

\begin{corollary}\label{corrr}
Suppose that $X, Y$ are dissociated sets with $|X| = k$ and $|E[X] \cap E[Y]| = \ell$. Then $|X \cap Y| \geq k - 5\sqrt{\binom{k}{2} - \ell}$.
\end{corollary}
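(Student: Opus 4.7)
The plan is to combine the two preceding lemmas almost mechanically, using the graph $\Gamma_Y$ introduced in the previous section as the bridge between the arithmetic statement (about $|X \cap Y|$) and the graph-theoretic statement (about vertices in copies of $K_5$).

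First, I would recall that $\Gamma_Y$ is a graph on vertex set $[k]$ (labelling the elements of $Y$ in the fixed total order $y_1 \prec \cdots \prec y_k$), in which $\{i,j\}$ is an edge exactly when $y_i + y_j \in E[X]$. By construction the number of edges of $\Gamma_Y$ is $|E[X] \cap E[Y]| = \ell$. Apply Lemma \ref{turan-consequence} to $\Gamma_Y$: all but at most $5\sqrt{\binom{k}{2} - \ell}$ vertices of $\Gamma_Y$ lie in some subgraph isomorphic to $K_5$.

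Next, I would argue that every such ``good'' vertex corresponds to an element of $X \cap Y$. Indeed, suppose $\{i_1,\ldots,i_5\} \subset [k]$ induces a $K_5$ in $\Gamma_Y$ and set $Z := \{y_{i_1},\ldots,y_{i_5}\} \subset Y$. Since $Y$ is dissociated and dissociativity is obviously inherited by subsets, $Z$ is dissociated as well. The $K_5$ condition says $y_{i_a} + y_{i_b} \in E[X]$ for all $a \neq b$, which is precisely $E[Z] \subset E[X]$. Since $|Z| = 5$, the $K_5$ lemma (Lemma \ref{k5-lemma}) applies and yields $Z \subset X$; in particular $y_{i_j} \in X \cap Y$ for every $j$.

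Combining the two steps, at least $k - 5\sqrt{\binom{k}{2} - \ell}$ of the $y_i$'s lie in $X$, which is exactly the claimed bound $|X \cap Y| \geq k - 5\sqrt{\binom{k}{2} - \ell}$. There is no real obstacle here once one has the $K_5$ lemma and the Tur\'an-type vertex-covering statement in hand; the only thing worth double-checking is that $Z$ really inherits dissociativity from $Y$, which is immediate from the definition since any linear relation among elements of $Z$ is a fortiori a relation among elements of $Y$.
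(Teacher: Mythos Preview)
Your argument is correct and is exactly the intended one: the paper states this corollary without a written-out proof precisely because it follows immediately by combining Lemma \ref{turan-consequence} (applied to $\Gamma_Y$, which has $\ell$ edges since $Y$ is dissociated) with Lemma \ref{k5-lemma}. The only point worth noting explicitly, which you have, is that the edge count of $\Gamma_Y$ equals $\ell$ because dissociativity of $Y$ makes the map $\{i,j\}\mapsto y_i+y_j$ injective.
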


\begin{corollary}\label{cor2}
Suppose that $X, Y$ are distinct dissociated sets of size $k$ and that $k$ is large. Then $|E[X] \cap E[Y]| \leq \binom{k}{2} - \frac{1}{2}k$.
\end{corollary}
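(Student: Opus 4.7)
The plan is to combine Corollary \ref{corrr}, which says $|X \cap Y| \geq k - 5\sqrt{\binom{k}{2} - \ell}$ where $\ell := |E[X] \cap E[Y]|$, with a direct edge-counting argument that actually uses the dissociativity of $X$ (the $K_5$-lemma alone only gives $\binom{k}{2} - \ell \geq 1/25$ when $X \neq Y$, which is far too weak). Set $W := X \cap Y$ and $w := |W|$, and split the $\ell$ edges of $\Gamma_Y$ according to whether their endpoints lie in $W$ or in $Y \setminus X$. Edges with both endpoints in $W$ contribute exactly $\binom{w}{2}$, since any two distinct elements of $W \subseteq X$ automatically sum into $E[X]$. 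Edges with both endpoints in $Y \setminus X$ are bounded trivially by $\binom{k-w}{2}$.

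The key step will be to bound the crossing edges: I claim that for each fixed $y \in Y \setminus X$, there is at most one $y' \in W$ with $y + y' \in E[X]$. Suppose otherwise, so that $y + y' = x_1 + x_2$ and $y + y'' = x_3 + x_4$ with $y' \neq y''$ in $W$ and $x_1 \neq x_2$, $x_3 \neq x_4$ in $X$. Because $y \notin X$, each equation on its own forces $y' \notin \{x_1,x_2\}$ and $y'' \notin \{x_3,x_4\}$ (otherwise $y$ would equal some $x_i$). Subtracting yields $y' + x_3 + x_4 = y'' + x_1 + x_2$, a $3 = 3$ relation among elements of $X$. The $4$-dissociativity hypothesis immediately implies $3$-dissociativity (pad both sides by the same element), so the multisets $\{y', x_3, x_4\}$ and $\{y'', x_1, x_2\}$ coincide; since $y' \neq y''$ this forces $y' \in \{x_1,x_2\}$, a contradiction. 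Hence crossing edges number at most $k - w$.

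Combining the three bounds gives $\ell \leq \binom{w}{2} + (k-w) + \binom{k-w}{2}$, and a direct expansion rearranges this to
\[ \binom{k}{2} - \ell \;\geq\; w(k-w) - (k-w) \;=\; (w-1)(k-w). \]
Suppose for contradiction that $\binom{k}{2} - \ell < k/2$. Then Corollary \ref{corrr} gives $k - w \leq 5\sqrt{k/2}$, so for $k$ large we have $w - 1 \geq k - 5\sqrt{k/2} - 1 > k/2$. Since $X \neq Y$ and $|X| = |Y| = k$ force $k - w \geq 1$, we obtain $(w-1)(k-w) \geq k/2$, contradicting the assumption and completing the proof.

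The hard part is the crossing-edge bound in the middle paragraph: it is precisely the place where one must exploit the arithmetic hypothesis (dissociativity of $X$) as opposed to merely the graph-theoretic $K_5$ lemma, and it is what converts the square-root-level overlap information from Corollary \ref{corrr} into the linear saving $k/2$ demanded by the statement.
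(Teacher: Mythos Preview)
Your proof is correct, and it follows the same overall contour as the paper's: assume $\binom{k}{2}-\ell < k/2$, invoke Corollary~\ref{corrr} to force $|X\cap Y|$ close to $k$, then exploit a vertex $y\in Y\setminus X$ to exhibit many missing edges in $\Gamma_Y$. The difference lies in how the crossing-edge bound is obtained. The paper applies the $K_5$ lemma a second time: if $y$ were adjacent (in $\Gamma_Y$) to four vertices $y_1,\dots,y_4\in X\cap Y$, then $Z=\{y,y_1,\dots,y_4\}$ would satisfy $E[Z]\subset E[X]$ and hence $y\in X$; so $y$ has at most $3$ neighbours in $X\cap Y$, and already the complement of $\Gamma_Y$ has more than $k/2$ edges incident to this single $y$. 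You bypass the $K_5$ lemma here entirely, using instead a short $3$-dissociativity argument (obtained by padding the $4$-dissociativity hypothesis) to get the sharper bound of at most one neighbour in $X\cap Y$, and then aggregate over all $y\in Y\setminus X$ via the identity $\binom{k}{2}-\binom{w}{2}-\binom{k-w}{2}=w(k-w)$. Your route is marginally more elementary at this step and yields a tighter intermediate inequality $\binom{k}{2}-\ell\geq (w-1)(k-w)$; the paper's route reuses existing machinery and needs only a single vertex. Both lead to the same conclusion with the same dependence on Corollary~\ref{corrr}.
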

\begin{proof}
Suppose the result is false. Then by Corollary \ref{corrr} we have $|X \cap Y| \geq k - \frac{5}{\sqrt{2}}\sqrt{k} > k - 4\sqrt{k}$. Since $X$ and $Y$ are distinct and have the same size $k$, there is some $y$ that lies in $Y$ but not in $X$. By Lemma \ref{k5-lemma}, $y$ is not joined in $\Gamma_Y$ to more than $3$ of the vertices corresponding to $X \cap Y$ by an edge. Indeed if it was joined to $4$ such vertices $y_1,y_2,y_3,y_4$ then we could apply Lemma \ref{k5-lemma} with $Z := \{y_1,y_2,y_3,y_4,y\}$, concluding that $Z \subset X$ and in particular that $y \in X$, contrary to assumption. It follows that the degree of $y$ in $\Gamma_Y$ is no more than $3 + 4\sqrt{k}$. Thus the complement of $\Gamma_Y$ contains at least $k - 1 - (3 + 4\sqrt{k}) > \frac{1}{2}k$ edges (if $n$, and hence $k$, is large enough).
\end{proof}

\begin{proof}[Proof of Lemma \ref{conditional-count-3}] In the statement of Lemma \ref{conditional-count-3} we assumed that $\binom{k}{2} - k^{4/3} < \ell \leq \binom{k}{2}$, but Corollary \ref{cor2} in fact allows us to assume the stronger upper bound
$\ell \leq \binom{k}{2} - \frac{1}{2} k$.

Recall that the graph $\Gamma_Y$ is defined as follows. It is a graph on vertex set $[k]$, with $i$ joined to $j$ by an edge if and only if $y_i + y_j \in E[X]$. Recall also that $d$ is the number of connected components of $\Gamma_Y$. 

We claim that the number of choices of $Y$ is at most $\binom{|S|}{d-1} 2^{o(k)}$. To see this, we consider a variant of the skeleton of $\Gamma_Y$, which we will again call $\sigma$. By Corollary \ref{corrr}, the graph $\Gamma_Y$ has one extremely large component containing a clique $\Omega$ of size at least $k - 5k^{2/3}$; every element of $Y$ assigned to a vertex in $\Omega$ is an element of $X$. Take $\sigma$ to be any collection of $\leq 5k^{2/3}$ edges such that the edges of $\Omega$ and $\sigma$ span all the connected components of $\Gamma_Y$. Such a collection $\sigma$ may be found using a greedy algorithm. The number of choices for $\sigma$ is clearly at most $\binom{k}{2}^{5k^{2/3}} = 2^{o(k)}$, and the number of choices for $\Omega$ is also $2^{o(k)}$.

Given $\Omega$ and $\sigma$, we must assign the set $Y$. All the vertices in $\Omega$ must be elements of $X$, so the number of choices for these vertices is at most the number of subsets of $X$ of size at least $k - 5k^{2/3}$, which is again $2^{o(k)}$. In each of the other $d-1$ connected components, select one vertex. The values of these vertices must be elements of $S$, and they must obey the order relation that $y_i \prec y_j$ if $i < j$, so this gives at most $\binom{|S|}{d-1}$ choices. 

The remaining unassigned vertices are connected to vertices already assigned (that it, to vertices in $\Omega$ or to the $d-1$ special vertices) by paths in $\sigma$. Each time we take an edge of $\sigma$ from an assigned vertex $i$ to a currently unassigned one $j$, the fact that $y_i + y_j \in E[X]$ gives us at most $\binom{k}{2}$ choices for $y_j$. Therefore the number of possible assignments of the remaining vertices, of which there are $k - |\Omega| - (d - 1) \leq 5k^{2/3}$, is at most $\binom{k}{2}^{5k^{2/3}} = 2^{o(k)}$, thereby concluding the proof of the claim.

Now from Lemma \ref{elldrelation} we have
\begin{equation}\label{eq994} (d - 1)(k - \frac{1}{2} d) \leq \binom{k}{2} - \ell < k^{4/3};\end{equation} since $d \leq k$, this immediately leads to the stronger bound \begin{equation}\label{eq995} d \ll k^{1/3}.\end{equation}
Equations \eqref{eq994} and \eqref{eq995} together imply that
\begin{equation}\label{eq996} \frac{1}{2}(d-1) k \leq (\frac{1}{2} + o(1)) (\binom{k}{2} - \ell). \end{equation}
Since $S$ is useful, we have 
\[ \binom{|S|}{k} 2^{-\binom{k}{2}} \leq n\] and so by Lemma \ref{binom2} we have
\begin{align*} \binom{|S|}{d-1} \leq  k^{d-1}  & \binom{|S|}{k}^{\frac{d-1}{k}} \leq 2^{\frac{1}{2} (d-1)k} k^{d-1} n^{\frac{d-1}{k}}  \\ & \leq 2^{\frac{1}{2} (d-1)k} k^{O(d)} \leq  2^{(\frac{1}{2} + o(1))(\binom{k}{2} - \ell)} k^d, \end{align*}
by \eqref{eq996}. By \eqref{eq995}, this is
\[ 2^{(\frac{1}{2} + o(1))(\binom{k}{2} - \ell) + o(k)}.\]
It follows from this and the earlier claim about the number of choices of $Y$ that the probability that there is some $Y \subset S$, distinct from $X$, such that $|E[X] \cap E[Y]| = \ell$ is bounded by
\[  2^{-(\frac{1}{2} - o(1))(\binom{k}{2} - \ell)  + o(k)}.\]  
When summed over the range $\binom{k}{2} - k^{4/3} < \ell \leq \binom{k}{2} - \frac{1}{2}k$ this is $o(1)$, as required.\end{proof}

\section{Further questions}

For a wide selection of further questions we refer the reader to \cite{alon} or \cite{christophides}. Here are two further questions:

\begin{question}
What is an asymptotic for $\chi(\Gamma_A)$, almost surely, when $A \subset \F_2^m$ is selected at random?
\end{question}

\begin{question}
If $G$ is an abelian group of size $n$ and if $A \subset G$ is selected at random, is $\chi(\Gamma_A) \omega(\Gamma_A) = (1 + o(1)) n$ almost surely?
\end{question}

\appendix

\section{Proof of the $K_5$ lemma}\label{tedious}

Let us begin by recalling the statement of the $K_5$ lemma, Lemma \ref{k5-lemma}.

\begin{k5-lemma-rpt}
Suppose that $X, Z \subset G$ are dissociated, that $|Z| \geq 5$ and that $E[Z] \subset E[X]$. Then $Z \subset X$.
\end{k5-lemma-rpt}

\begin{proof}
The proof of this is somewhat tedious, though straightforward.  We begin by looking at sets $Z$ of size $4$ for which $E[Z] \subset E[X]$. We claim that such sets are of two types: \emph{type I} in which $Z \subset X$, and \emph{type II} in which $z_i = g - x_i$ for $i = 1,2,3,4$, where $2g= x_1 + x_2 + x_3 + x_4$ and the $x_i$ are all elements of $X$.

Let us prove this claim. Suppose that $Z = \{z_1,z_2,z_3,z_4\}$ and that $z_i + z_j = x_{ij} + x'_{ij}$. We have the relations
\begin{equation}\label{master} x_{12} + x'_{12} + x_{34} + x'_{34} = x_{13} + x'_{13} + x_{24} + x'_{24} = x_{14} + x'_{14} + x_{23} + x'_{23}.\end{equation}
By dissociativity, $x_{12} \in \{x_{14}, x'_{14}, x_{23}, x'_{23}\}$, and there is no loss of generality in assuming that $x_{12} = x_{14}$.  Thus
\[ x'_{12} + x_{34} + x'_{34} = x'_{14} + x_{23} + x'_{23}.\] By dissociativity, $x'_{12} \in \{x'_{14}, x_{23}, x'_{23}\}$.
Now $z_2 \neq z_4$ and so $x'_{14} - x'_{12} = z_4 - z_2 \neq 0$. Hence we may assume without loss of generality that $x'_{12} = x'_{23}$. 
Dissociativity and \eqref{master} also implies that $x_{12} \in \{ x_{13}, x'_{13}, x_{24}, x'_{24}\}$. There are two essentially different cases: \emph{case 1} in which $x_{12} = x_{13}$, and \emph{case 2} in which $x_{12} = x_{24}$. 

Suppose we are in \emph{case 1}. Then \eqref{master} implies that $x'_{12} + x_{34} + x'_{34} = x'_{13} + x_{24} + x'_{24}$, and so by dissociativity $x'_{12} \in \{x'_{13}, x_{24}, x'_{24}\}$. Since $z_2 \neq z_3$, we cannot have $x'_{12} = x'_{13}$. Without loss of generality, then, $x'_{12} = x'_{24}$. Referring back to \eqref{master}, we see that $x_{34} + x'_{34} = x_{24} + x'_{13}  =  x_{23} + x'_{14}$. By dissociativity we have $x'_{13} \in \{x_{23}, x'_{14}\}$. However, if $x'_{13} = x'_{14}$ then we would have $z_3 = z_4$, a contradiction, and therefore $x'_{13} = x_{23}$ and $x'_{14} = x_{24}$. Writing $x_1 = x_{12}$, $x_2 = x'_{12}$, $x_3 = x_{23}$, $x_4 = x_{24}$, the above relations imply that $x_i + x_j = z_i + z_j$ for all distinct $i, j \in \{1,2,3,4\}$. Writing $w_i := x_i - z_i$, it follows that $w_i + w_j = 0$ for distinct $i, j$. This immediately implies that all of the $w_i$ are zero. This is the \emph{type I} situation.

Suppose now that we are in \emph{case 2}, that is to say $x_{12} = x_{24}$. Now \eqref{master} implies that $x_{23} + x'_{14} = x_{34} + x'_{34}$. By dissociativity we have $\{x_{23}, x'_{14} \}= \{x_{34}, x'_{34}\}$, and there is no loss of generality in assuming that $x_{23} = x_{34}$ and $x'_{14} = x'_{34}$. We also have, from \eqref{master}, $x'_{24} + x_{13} + x'_{13} = x'_{12} + x_{23} + x'_{14}$ and so, by dissociativity, $x'_{24} \in \{x'_{12}, x_{23}, x'_{14}\}$. Since the $z_i$ are all distinct we cannot have either $x'_{24} = x'_{12}$ or $x'_{24} = x_{23}$, so we must have $x'_{24} = x'_{14}$. Writing $x_1 = x_{23}$, $x_2 = x'_{14}$, $x_3 = x_{12}$ and $x_4 = x'_{12}$, we have $z_i + z_j + x_i + x_j = s$ for all distinct $i,j \in \{1,2,3,4\}$, where $s = x_1 + x_2 + x_3 + x_4$. Writing $w_i := x_i + z_i$, this implies that $w_i + w_j = s$ for all $i \neq j$. This easily implies that all of the $w_i$ are equal to some $g$ and that $2g = s$. This is the \emph{type II} situation. This completes the proof of the claim.

Suppose now that $|Z| \geq 5$ and that $E[Z] \subset E[X]$. Every set $Z' \subset Z$ with $|Z'| = 4$ is of type I or II. We claim that if there is \emph{any} set of type I then in fact the whole of $Z$ is a subset of $X$, thereby concluding the proof. Suppose that $Z'$ is of type I, thus $Z' = \{x_1, x_2, x_3, x_4\}$ with the $x_i$ being elements of $X$. Suppose that $z \in Z \setminus Z'$. Then we have $z + x_i  = x'_i + x''_i$ for $i = 1,2,3,4$ and some $x'_i, x''_i$. In particular $x_1 + x'_2 + x''_2 = x_2 + x'_1 + x''_1$, and so by dissociativity we have, without loss of generality, $x_1 = x'_1$. It then follows immediately that $z = x''_1$, thereby establishing the claim.

It remains to examine the possibility that every subset $Z' \subset Z$ with $|Z'| = 4$ is of type II. We claim this case cannot occur. We may suppose that $Z'$ consists of elements $g - x_i$, $i = 1,2,3,4$, where $2g = x_1 + x_2 + x_3 + x_4$. Let $Z'' \subset Z$ be a different subset of size $4$, intersecting $Z'$ in $\{g - x_1, g - x_2, g - x_3\}$. Since this set is also of type II, we may label it so that it consists of elements $g' - x'_i$ with $2g' = x'_1 + x'_2 + x'_3 + x'_4$, where $g - x_i = g' - x'_i$ for $i = 1,2, 3$. This last relation certainly implies that $x_1 + x'_2 = x'_1 + x_2$ and hence, by dissociativity, that $x_1 \in \{x'_1, x_2\}$. Since the elements of $Z'$ are distinct, we cannot have $x_1 = x_2$. Therefore $x_1 = x'_1$, from which it follows that $g = g'$. It then follows that $x_2 = x'_2$ and $x_3 = x'_3$. Finally, since $x'_1 + x'_2 + x'_3 + x'_4 = 2g' = 2g = x_1 + x_2 + x_3 + x_4$, it follows that $x_4 = x'_4$. But then $g - x_4 = g' - x'_4$ lies in both $Z'$ and $Z''$, contrary to assumption. This contradiction establishes the claim.
\end{proof}

\section{On a result of Alon, Krivelevich and Sudakov}\label{aks-appendix}

In this appendix we give a short proof that $\chi(\Gamma_A) \leq (2 + o(1))\frac{n}{\log_2 n}$ almost surely if $A$ is a random subset of an abelian group $G$, $|G| = n$. The argument is basically that of Alon, Krivelevich and Sudakov, but because we are dealing with Cayley sum graphs rather than arbitrary regular graphs we can use a concise Fourier argument instead of an eigenvalue argument.

We assume some familiarity with the notation of the discrete Fourier transform as discussed in \cite[Chapter 4]{tao-vu}, for example. Here we will be writing $\hat{f}(\gamma) := \E_{x \in G} f(x) \overline{\gamma(x)}$ for $\gamma \in G^*$.

\begin{proposition}\label{pseudo}
Suppose that $\sup_{\gamma \neq 1} |\hat{1}_A(\gamma)| \leq n^{-\eta}$. Then $\chi(\Gamma_A) \ll (\frac{1}{\eta} - o(1)) \frac{n}{\log_2 n}$. 
\end{proposition}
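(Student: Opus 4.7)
The plan is to colour $\Gamma_A$ greedily by iteratively extracting large independent sets. A set $X \subset G$ is independent in $\Gamma_A$ if and only if $x + x' \notin A$ for all distinct $x, x' \in X$. The key claim to be established is that every $B \subset G$ with $|B| \geq N/\log^2 N$ contains an independent set of $\Gamma_A|_B$ of size $k := \lfloor (\eta - o(1)) \log_2 N \rfloor$. Given this, iteratively extracting such an independent set from the uncoloured part and giving it a fresh colour exhausts all but $O(N/\log^2 N)$ of $G$ in at most $N/k = (1+o(1))(1/\eta)(N/\log_2 N)$ steps; colouring the residual vertices individually contributes only $o(N/\log N)$ extra colours, giving the claimed bound on $\chi(\Gamma_A)$.

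To build the independent set inside $B$ I would use a greedy procedure. Set $B_1 := B$; having chosen $r_1, \ldots, r_{j-1}$ so that $r_{i_1} + r_{i_2} \notin A$ for distinct $i_1, i_2 < j$, let
\begin{equation*}
B_j := B \cap \bigcap_{i<j}(A^c - r_i) \setminus \{r_1, \ldots, r_{j-1}\}
\end{equation*}
be the set of valid candidates for $r_j$. The averaging identity
\begin{equation*}
\frac{1}{|B_j|} \sum_{r \in B_j} |B_j \cap (A^c - r)| = \frac{1}{|B_j|} \sum_{x, y \in B_j} 1_{A^c}(x + y),
\end{equation*}
combined with a Fourier expansion of $1_{A^c}$ and Parseval, identifies the average as $\alpha' |B_j| + O(N^{1-\eta})$, where $\alpha' := |A^c|/N$. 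Indeed the error is essentially $\bigl|N^2 \sum_{\gamma \neq 1} \hat{1}_{A^c}(\gamma) (\hat{1}_{B_j}(\bar\gamma))^2\bigr| \leq N^{-\eta} \cdot N^2 \cdot \sum_{\gamma} |\hat{1}_{B_j}(\gamma)|^2 = N^{1-\eta}|B_j|$ (using $\sum_\gamma |\hat{1}_{B_j}(\gamma)|^2 = |B_j|/N$), divided by $|B_j|$. So some $r_j \in B_j$ can be chosen with $|B_{j+1}| \geq \alpha' |B_j| - O(N^{1-\eta})$, and induction yields $|B_j| \geq (\alpha')^{j-1}|B| - O(j N^{1-\eta})$. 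For $|B| \geq N/\log^2 N$ and $j \leq k = (\eta - o(1))\log_2 N$ the main term is of order $N^{1-\eta+o(1)}$ and dominates the $O(j N^{1-\eta})$ error, so $B_j$ is nonempty throughout and we extract the desired independent set.

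The main technical point is keeping the Fourier error $O(N^{1-\eta})$ subdominant to the geometrically shrinking main term $(\alpha')^{j-1}|B|$, a balance that becomes tight as $j$ approaches $k$. With $|B|/N \geq \log^{-2}N$ and $(\alpha')^{k-1} = N^{-\eta+o(1)}$ (assuming $\alpha' \to 1/2$), the main term $N^{1-\eta-o(1)}$ exceeds the error $N^{1-\eta}\log N$ provided $k$ is below $\eta \log_2 N$ by a slowly growing function of $\log \log N$, which is what the $(\eta - o(1))$ in $k$ ensures---this is also why the final chromatic bound has a $(1 + o(1))$ in front rather than a sharper constant. The density $\alpha = |A|/N$ must of course be bounded away from $0$ and $1$ for the base of the logarithm $1/\alpha'$ to be meaningful; in the random-$A$ case of interest, $\alpha \to 1/2$ gives base $2$, yielding $\chi(\Gamma_A) \leq (1/\eta + o(1))N/\log_2 N$ as claimed.
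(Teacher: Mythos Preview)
Your proposal is correct and follows essentially the same route as the paper: use the Fourier/Parseval bound to show that from any set $S$ with $|S| \gg N^{1-\eta}$ one can pick $x \in S$ so that at least $(\tfrac12 - o(1))|S|$ elements $x' \in S$ satisfy $x + x' \notin A$, iterate to extract an independent set of size $(\eta - o(1))\log_2 N$ from any set of size at least $N/\log^2 N$, and then greedily colour. Your write-up is a bit more explicit about the recursion $|B_{j+1}| \geq \alpha' |B_j| - O(N^{1-\eta})$ and more careful about the density $\alpha' = |A^c|/N$ (which the paper silently takes to be $\tfrac12$), but the argument is the same.
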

\begin{proof}
First of all note that 
\[ \sum_{x,x' \in S} 1_A(x+x') = n^2 \sum_\gamma \hat{1}_S(\gamma)^2 \hat{1}_A(\overline{\gamma}) = \frac{1}{2}|S|^2 + n^2 \sum_{\gamma \neq 1} \hat{1}_S(\gamma)^2 \hat{1}_A(\overline{\gamma}) .\]
If $S \subset G$ has $|S| > n^{1-\eta} \log n$ (say) then the error term here can be efficiently bounded by Parseval's identity and the triangle inequality:
\[ n^2\big|\sum_{\gamma \neq 1} \hat{1}_S(\gamma)^2 \hat{1}_A(\overline{\gamma})\big| \leq n^{2-\eta} \sum_\gamma |\hat{1}_S(\gamma)|^2 = n^{2-\eta}\frac{|S|}{n} = o(|S|^2).\] 
Thus \[ \sum_{x,x' \in S} 1_A(x + x') = \big(\frac{1}{2} + o(1)\big) |S|^2,\] and in particular there is some $x \in S$ such that $x + x' \notin A$ for all $x'$ in some subset $S' \subset S$ of size at least $(\frac{1}{2} - o(1))|S|$.

By repeated application of this, it follows that any set $S \subset G$ of size at least $\frac{n}{\log^2 n}$ has a subset $X$ of size at least $(\eta - o(1))\log_2 n$ with the property that $X \hat{+} X$ is disjoint from $A$.

From this the result follows straightforwardly by iteration, as in Section \ref{sec2}.
\end{proof}

If $A \subset G$ is a random set then almost surely we have
\[ \sup_{\gamma \neq 1}| \hat{1}_A(\gamma)| \ll n^{-1/2 + o(1)}.\] This follows from a standard application of Bernstein's large deviation bound for each individual $\gamma \in G^*$, followed by a union bound over all $\gamma \neq 1$; see for example \cite[Lemma 4.16]{tao-vu}. Combining this with Proposition \ref{pseudo} tells us that indeed $\chi(\Gamma_A) \ll (2 + o(1)) \frac{n}{\log_2 n}$ almost surely.

\section{Some bounds on binomial coefficients}

In this appendix we collect some bounds on binomial coefficients. These are of standard type, and we have often given crude bounds sufficient for our purposes rather than the strongest possible estimates.

\begin{lemma}\label{binom1}
Let $n \geq k \geq 1$ be integers. Then $\binom{2n}{k} \geq 2^{k} \binom{n}{k}$.
\end{lemma}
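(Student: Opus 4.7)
The plan is to divide both sides by $\binom{n}{k}$ and prove the equivalent ratio bound
\[
\frac{\binom{2n}{k}}{\binom{n}{k}} = \prod_{i=0}^{k-1}\frac{2n-i}{n-i} > 2^{k/2}
\]
by induction on $k$, treating $n$ as a parameter with $n \geq k$. If $n < k$ then $\binom{n}{k} = 0$ and the inequality $\binom{2n}{k} > 0$ is immediate, so this restriction loses nothing.

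For the base case $k = 1$, the ratio equals $2n/n = 2 > \sqrt{2}$. For the inductive step, I would observe that the ratio for $k+1$ differs from the ratio for $k$ by the single multiplicative factor $(2n-k)/(n-k)$, and that
\[
\frac{2n-k}{n-k} = 2 + \frac{k}{n-k} \geq 2
\]
whenever $n > k$. Combining this with the inductive hypothesis gives
\[
\frac{\binom{2n}{k+1}}{\binom{n}{k+1}} > 2^{k/2} \cdot 2 = 2^{(k+2)/2} > 2^{(k+1)/2},
\]
which closes the induction. (If $n = k$, then $\binom{n}{k+1} = 0$ and the step is trivial.)

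There is no genuine obstacle here, in keeping with the appendix's promise of crude bounds sufficient for our purposes; the only mild bookkeeping is handling the boundary case $n = k$, which comes for free. The factor $(2n-k)/(n-k) \geq 2$ provides a comfortable buffer over the required $\sqrt{2}$ at each step, which is why the argument goes through without any more delicate estimate.
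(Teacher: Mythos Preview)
Your proof is correct. It is, in fact, more direct than the paper's. You express the ratio as a product of $k$ factors $\prod_{i=0}^{k-1}(2n-i)/(n-i)$ and use induction to peel off one factor at a time, noting $(2n-k)/(n-k) = 2 + k/(n-k) \geq 2$; this actually yields the stronger bound $\binom{2n}{k}/\binom{n}{k} \geq 2^{k}$. The paper instead writes the same ratio as a product of $n$ factors $\prod_{j=n+1}^{2n} j/(j-k)$, bounds each from below by the smallest one $2n/(2n-k)$, and then uses the analytic estimate $(1+x)^{1/x} > 2$ (with $x = k/(2n) \leq 1/2$) to obtain $(2n/(2n-k))^n > 2^{k/2}$. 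Your route avoids this extra step and the reparametrisation; the paper's route gains nothing in return. The mild bookkeeping you flag about $n=k$ and $n<k$ is unnecessary given the hypothesis $n \geq k$ in the statement, but it does no harm.
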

\begin{proof}
We have
\[ \frac{\binom{2n}{k}}{\binom{n}{k}}   =  \frac{2n}{n} \cdot \frac{2n - 1}{n-1} \cdots \frac{2n - k + 1}{n - k + 1} \geq 2^k,\] as required.
\end{proof}

\begin{lemma}\label{binom2}
Let $n \geq k \geq d \geq 1$ be integers. Then $\binom{n}{d} \leq k^d \binom{n}{k}^{d/k}$.
\end{lemma}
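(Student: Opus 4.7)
The plan is to derive Lemma \ref{binom2} by chaining two elementary crude estimates. First, the trivial upper bound $\binom{n}{d} \leq n^d$, obtained by bounding each factor in the numerator of $\binom{n}{d} = \frac{n(n-1)\cdots(n-d+1)}{d!}$ by $n$ and dropping the $d!$. Second, the lower bound $\binom{n}{k} \geq (n/k)^k$, valid for $n \geq k \geq 1$, which on taking $k$-th roots says exactly that $k \binom{n}{k}^{1/k} \geq n$. Granted both, the conclusion is a one-liner:
\[ \binom{n}{d} \;\leq\; n^d \;\leq\; \bigl(k\binom{n}{k}^{1/k}\bigr)^{d} \;=\; k^{d}\binom{n}{k}^{d/k}. \]

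The only step with any content is the lower bound $\binom{n}{k} \geq (n/k)^{k}$. For this I would write
\[ \binom{n}{k} \;=\; \prod_{i=0}^{k-1}\frac{n-i}{k-i} \]
and check the individual inequality $\frac{n-i}{k-i} \geq \frac{n}{k}$ for each $0 \leq i \leq k-1$. Clearing denominators reduces this to $k(n-i) \geq n(k-i)$, i.e.\ to $i(n-k) \geq 0$, which holds because $n \geq k$ by hypothesis (with equality at $i = 0$).

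I do not anticipate any real obstacle, since the target inequality is very loose: the right-hand side is of order $(en)^{d}$ while the left is at most $n^{d}/d!$. In particular there is no need for Stirling, log-concavity, entropy, or the identity $\binom{n}{k}\binom{k}{d} = \binom{n}{d}\binom{n-d}{k-d}$; the two one-line bounds above suffice.
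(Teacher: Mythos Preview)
Your proof is correct and is essentially identical to the paper's own argument: both use $\binom{n}{d} \leq n^d$ together with $\binom{n}{k} \geq (n/k)^k$ (proved factor-by-factor) and chain them exactly as you do.
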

\begin{proof}
First note that $n^k \leq k^k \binom{n}{k}$. Indeed, 
\[ \binom{n}{k} = \frac{n}{k} \cdot \frac{n-1}{k-1} \dots \frac{n - k + 1}{1} \geq (\frac{n}{k})^k.\]
Therefore we have
\[ \binom{n}{d} \leq n^d = (n^k)^{d/k} \leq \big( k^k \binom{n}{k} \big)^{d/k},\] which is the stated bound.
\end{proof}

\begin{lemma}\label{binom3} Let $n \geq k \geq d \geq 1$ be integers. Then $\binom{n}{d} \leq \big(\frac{k}{n-k}\big)^{k - d} \binom{n}{k}$.
\end{lemma}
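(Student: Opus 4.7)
The plan is to reduce the inequality to a product of $k-d$ elementary ratios, each of which is then bounded below by $(n-k)/k$ by inspection. Writing the binomial coefficients as factorials and cancelling, one obtains
\[ \frac{\binom{n}{k}}{\binom{n}{d}} = \frac{d!(n-d)!}{k!(n-k)!} = \prod_{i=1}^{k-d} \frac{n-k+i}{d+i}. \]
This is the key identity: the numerator indices of the product range from $n-k+1$ up to $n-d$, and the denominator indices range from $d+1$ up to $k$.

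The second (and only substantive) step is to observe that for each $i \in \{1,\dots, k-d\}$ the numerator of the $i$-th factor satisfies $n-k+i \geq n-k$, while the denominator satisfies $d+i \leq d + (k-d) = k$. Hence every factor is at least $(n-k)/k$, and the product of $k-d$ such factors is at least $\bigl(\frac{n-k}{k}\bigr)^{k-d}$. Rearranging this lower bound on $\binom{n}{k}/\binom{n}{d}$ gives the claimed upper bound on $\binom{n}{d}$.

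There is essentially no obstacle here; the argument is a one-line factorial manipulation followed by a trivial term-by-term estimate. The only thing to keep an eye on is the boundary behaviour: when $k = d$ the product is empty and the statement reduces to $\binom{n}{d} \leq \binom{n}{d}$, while the corner case $n = k$ (with $k > d$) only arises if one insists on including it, and then it is handled vacuously by the convention that $k/(n-k)$ is $+\infty$. Since the paper always applies the lemma with $n = |S| \gg k$, neither edge case is a concern in use.
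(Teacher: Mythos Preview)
Your proof is correct and is essentially identical to the paper's own argument: the paper writes $\binom{n}{d} = \binom{n}{k}\cdot \frac{d+1}{n-d}\cdot \frac{d+2}{n-d-1}\cdots \frac{k}{n-k+1}$ and bounds each factor by $k/(n-k)$, which is exactly your product identity viewed from the reciprocal side. Your remarks on the boundary cases $k=d$ and $n=k$ are a reasonable addition but not needed for the applications in the paper.
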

\begin{proof} We have
\[ \binom{n}{d} = \binom{n}{k} \cdot \frac{d + 1}{n - d} \cdot \frac{d+2}{n - d - 1}\dots \frac{k}{n - k +1} \leq \big(\frac{k}{n-k}\big)^{k - d}\binom{n}{k} .\]
\end{proof}


\begin{thebibliography}{99}

\bibitem{alon-early} P.~K.~Agarwal, N.~Alon, B.~Aronov and S.~Suri \emph{Can visibility graphs Be represented compactly?}, ACM Symposium on Computational Geometry (San Diego, CA, 1993). Discrete Comput. Geom. \textbf{12} (1994), no. 3, 347--365.


\bibitem{alon} N.~Alon, \emph{The chromatic number of random Cayley graphs,} European J. Combin. \textbf{34} (2013), no. 8, 1232--1243.

\bibitem{aks} N.~Alon, M.~Krivelevich and B.~Sudakov, \emph{List coloring of random and pseudo-random graphs,} Combinatorica \textbf{19} (1999), no. 4, 453--472. 

\bibitem{alon-spencer} N.~Alon and J.~H.~Spencer, \emph{The probabilistic method,}. Second edition. Wiley Interscience, New York, 2000. xviii+301 pp. 


\bibitem{bollobas} B.~Bollob\'as, \emph{The chromatic number of random graphs,} Combinatorica \textbf{8} (1988), no. 1, 49--55.

\bibitem{christophides} D.~Christophides, \emph{Random Cayley graphs,} to appear in Midsummer Combinatorial Workshop 2011, available at\\
\texttt{http://www.christofides.org/Papers/mcw11.pdf}

\bibitem{green-cliquenumber} B.~J.~Green, \emph{Counting sets with small sumset, and the clique number of random Cayley graphs,} Combinatorica \textbf{25} (2005), no. 3, 307--326.

\bibitem{green-morris} B.~J.~Green and R.~Morris, \emph{Counting sets with small sumset and applications,} Combinatorica \textbf{36} (2016), no. 2, 129--159. 

\bibitem{tao-vu} T.~C.~Tao and V.~H.~Vu, \emph{Additive combinatorics,} Cambridge Studies in Advanced Mathematics \textbf{105}, Cambridge University Press, Cambridge, 2006. xviii+512 pp.

\end{thebibliography}
\end{document}